\numberwithin{equation}{section}
\newtheorem{theorem}{Theorem}[section]
\newtheorem{proposition}[theorem]{Proposition}
\theoremstyle{definition}
\theoremstyle{plain}
\newtheorem{lemma}[theorem]{Lemma}
\newtheorem{corollary}[theorem]{Corollary}
\theoremstyle{remark}
\newtheorem{remark}[theorem]{Remark}
\title[Weak Limits of Fractional Sobolev Homeomorphisms]{Weak Limits of Fractional Sobolev Homeomorphisms are Almost Injective: A Note}
 \author{Armin Schikorra}
\address[Armin Schikorra]{Department of Mathematics,
University of Pittsburgh,
301 Thackeray Hall,
Pittsburgh, PA 15260, USA}
\email{armin@pitt.edu}%
 \author{James M. Scott}
\address[James M. Scott]{Department of Mathematics,
University of Pittsburgh,
301 Thackeray Hall,
Pittsburgh, PA 15260, USA}
\email{james.scott@pitt.edu}%
\newcommand{\fstar}{f^{\ast}}
\newcommand{\osc}{{\rm osc}}
\newcommand{\R}{\mathbb{R}}
\newcommand{\eps}{\varepsilon}
\renewcommand{\S}{\mathbb{S}}
\newcommand{\Z}{\mathbb{Z}}
\newcommand{\N}{\mathbb{N}}
\newcommand{\brac}[1]{\left ( #1 \right )}
\newcommand{\abs}[1]{\left | #1 \right |}
\begin{document}

\begin{abstract}
Let $\Omega \subset \R^n$ be an open set and $f_k \in W^{s,p}(\Omega;\R^n)$ be a sequence of homeomorphisms weakly converging to $f \in W^{s,p}(\Omega;\R^n)$. It is known that if $s=1$ and $p > n-1$ then $f$ is injective almost everywhere in the domain and the target. In this note we extend such results to the case $s\in(0,1)$ and $sp > n-1$. This in particular applies to $C^s$-H\"older maps. 
\end{abstract}

\maketitle

\section{Introduction and main result}
The goal of this note is to prove the following theorem:

\begin{theorem}\label{thm:MainThm}
Let $\Omega \subset \bbR^n$, $n \geq 2$, be open and let $f : \Omega \to \bbR^n$ be a weak $W^{s,p}$-limit of Sobolev homeomorphisms $f_j \in W^{s,p}(\Omega;\bbR^n)$ with $sp > n-1$. Then there is a representative $\wh{f}$ and a set $\Gamma \subset \bbR^n$ of Hausdorff dimension $\frac{n-1}{s}$ such that $(\wh{f})^{-1}(y)$ consists of only one point for every $y \in \wh{f}(\Omega) \setminus \Gamma$.
\end{theorem}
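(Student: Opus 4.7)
The plan is to mirror the topological-degree strategy of the classical case $s=1$, $p>n-1$ (Iwaniec--Onninen, Hencl--Koskela) and to replace each ingredient by its fractional counterpart, using $sp>n-1$ to control traces of $f$ on $(n-1)$-dimensional spheres. First I would pass to the $(s,p)$-quasicontinuous representative $\wh{f}$, defined outside a set of zero fractional Bessel $(s,p)$-capacity; since $sp>n-1$, such exceptional sets have Hausdorff dimension strictly less than $n$ and in particular do not meet $\mathcal{H}^{n-1}$-a.e.\ sphere. Choosing analogous representatives $\wh{f}_j$ of the $f_j$, invariance of domain gives $\deg(f_j, B_r(x_0), y) \in \{0, \pm 1\}$ for every ball $B_r(x_0) \Subset \Omega$ and every $y \notin f_j(\partial B_r(x_0))$, and injectivity of $f_j$ translates into uniqueness of preimages wherever the degree is nonzero.

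Next, via a fractional Fuglede-type argument, for each $x_0$ one selects a set of radii $r$ of full measure on which the traces of $f_j$ on $\partial B_r(x_0)$ converge in $W^{s-1/p,p}(\partial B_r(x_0);\bbR^n)$ to the trace of $\wh{f}$. The condition $sp>n-1$ is precisely what ensures this trace space is well-defined and carries enough information for the Brouwer degree to pass to the weak limit. One then concludes $\deg(\wh{f}, B_r(x_0), \cdot) \in \{0, \pm 1\}$ for generic $r$, so $\wh{f}^{-1}(y)\cap B_r(x_0)$ reduces to a single point for every $y$ in the topological image $\topim(\wh{f}, B_r(x_0))$, outside a negligible set of radii. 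Collecting the pairs $(x_0, r)$ where this degree argument breaks down produces a ``bad set'' $A\subset\Omega$ contained in a countable union of $(n-1)$-dimensional sets, hence satisfying $\dim_{\mathcal{H}} A \le n-1$.

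Finally, set $\Gamma := \wh{f}(A)$. The Hausdorff-dimension bound on $\Gamma$ comes from the fact that $\wh{f}\in W^{s,p}$ with $sp>n-1$ admits a fractional Morrey-type estimate on lower-dimensional subsets: on $(n-1)$-dimensional sets, $\wh{f}$ obeys an oscillation estimate that behaves like $C^s$-Hölder continuity, and therefore sends sets of Hausdorff dimension at most $n-1$ into sets of Hausdorff dimension at most $(n-1)/s$. This yields $\dim_{\mathcal{H}} \Gamma \le (n-1)/s$, and outside $\Gamma$ every $y \in \wh{f}(\Omega)$ has a unique preimage under $\wh{f}$.

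The main obstacle is the fractional trace/degree stability: for $s=1$, $p>n-1$, classical ACL-style arguments give continuity of $f$ on $\mathcal{H}^1$-a.e.\ sphere, immediately allowing passage to the degree-theoretic limit, whereas for $s<1$ the trace only lies in $W^{s-1/p,p}$, which fails to be continuous when $sp\le n$. One therefore needs quantitative oscillation control on ``most'' spheres, or a capacitary argument ruling out nontrivial degree contributions on a negligible set of radii. The related technical hurdle is transporting Hausdorff-dimension bounds through $\wh{f}$ in the absence of genuine $C^s$-regularity of the whole map.
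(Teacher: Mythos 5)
Your high-level plan — pass to a good representative, control traces on almost every sphere, use Brouwer degree to pass injectivity-type information to the weak limit, and bound $\Gamma$ via the images of a countable family of spheres — does match the strategy of the paper. However, two of the fractional ingredients you plug in are incorrect or too weak, and each invalidates a step.

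First, the trace space. You propose working with traces in $W^{s-1/p,p}(\partial B_r(x_0))$ and assert that $sp>n-1$ is ``precisely what ensures this trace space \ldots carries enough information for the Brouwer degree to pass to the weak limit.'' For $W^{s-1/p,p}$ on an $(n-1)$-dimensional sphere, the Morrey embedding into continuous functions requires $(s-\tfrac1p)p>n-1$, i.e.\ $sp>n$, which is strictly stronger than the hypothesis. The loss of $1/p$ derivatives is the worst-case behaviour of the trace on a \emph{fixed} boundary; what saves the day is a Fubini/Fatou-type slicing estimate (the paper's \Cref{lma:TangentDerivEstimate} and \Cref{cor:PrecRepOnSphere}) showing that for $\scL^1$-a.e.\ radius $r$ the restriction of $\fstar$ to $\partial B_r(x_0)$ lies in the \emph{full} space $W^{s,p}(\partial B_r(x_0))$. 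Only then does Morrey on the $(n-1)$-dimensional sphere require exactly $sp>n-1$, and only then does the uniform convergence on good slices (\Cref{lma:ConvergenceOnSpheres}) hold. As stated, your argument would deliver the result only for $sp>n$.

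Second, the dimension bound for $\Gamma$. You argue that on $(n-1)$-dimensional subsets $\wh f$ ``obeys an oscillation estimate that behaves like $C^s$-Hölder continuity,'' and thereby sends sets of dimension $n-1$ to sets of dimension at most $(n-1)/s$. But the Morrey oscillation estimate you can actually get from $W^{s,p}(\partial B)$ with $sp>n-1$ is $C^{s-(n-1)/p}$-Hölder on the sphere, not $C^s$; the resulting naive bound on the image dimension is $(n-1)/(s-(n-1)/p)$, which is strictly larger than $(n-1)/s$ for finite $p$. To obtain the sharp exponent $(n-1)/s$ the paper cannot rely on Hölder oscillation alone; it imports the Hencl–Honzík distortion theorem for Bessel potential maps (\Cref{prop:HenclDistortionThm}) and applies it to $g|_{\partial B}\in H^{t,p_t}$ for $t\uparrow s$ via the Triebel–Lizorkin embedding (\Cref{thm:TriebelEmbedding}), yielding $\dim_{\scH}(f(\partial B))\le (n-1)/t$ for all $t<s$ (\Cref{thm:DimOfImage}). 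Without such a distortion theorem, your $\Gamma$ only has dimension at most $(n-1)/(s-(n-1)/p)$, which is not the claimed conclusion.

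A smaller point: the paper does not assume $\deg(\wh f,B_r,y)\in\{0,\pm1\}$ survives the weak limit; it only uses the cruder but more robust monotonicity statements of \Cref{lma:TopMonotonicity} (the image of an inner sphere together with the nonzero-degree set is nested, and nonzero-degree sets of disjoint balls are disjoint), which is all the final contradiction argument needs.
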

For definitions we refer to the next section. An immediate corollary of \Cref{thm:MainThm} and the embedding $C^{s} \hookrightarrow W^{s-\eps,p}_{loc}$ for any $\eps > 0$ is the following statement for H\"older maps.
\begin{corollary}\label{co:hoelderversion}
Let $\Omega \subset \bbR^n$, $n \geq 2$ be open and let $f \in C^s(\Omega;\R^n)$ be the pointwise limit of a sequence of equibounded homeomorphisms $f_j \in C^{s}(\Omega;\bbR^n)$.
If $s>\frac{n}{n-1}$, then there is a set $\Gamma \subset \bbR^n$ of Hausdorff dimension $\frac{n-1}{s}$ such that $(f)^{-1}(y)$ consists of only one point for every $y \in f(\Omega) \setminus \Gamma$.
\end{corollary}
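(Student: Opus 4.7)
My approach is to promote the pointwise $C^s$-convergence to weak convergence in a fractional Sobolev space and then directly apply \Cref{thm:MainThm}. Fix a relatively compact open set $\Omega' \Subset \Omega$, fix $\eps \in (0,s)$, and choose $p \in [1,\infty)$ large enough that $(s-\eps)p > n-1$; this is possible for every $s > 0$, in particular under the stated hypothesis on $s$. On the bounded set $\Omega'$ the Hölder estimate $|f(x)-f(y)| \le [f]_{C^s}|x-y|^s$ dominates the Gagliardo $W^{s-\eps,p}$-seminorm (inserting the estimate yields the integrand $|x-y|^{-n+\eps p}$, which is locally integrable near the diagonal), so the equibounded family $\{f_j\}$ is uniformly bounded in $W^{s-\eps,p}(\Omega';\R^n)$.

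Next I would upgrade the pointwise convergence to weak convergence. By Arzelà--Ascoli the equibounded $C^s$-family is precompact in $C^0_{loc}(\Omega)$, and every locally uniform cluster point must agree with the pointwise limit $f$; hence the full sequence converges to $f$ locally uniformly. Combined with the uniform $W^{s-\eps,p}(\Omega')$-bound and reflexivity (take $p > 1$), this forces $f_j \rightharpoonup f$ weakly in $W^{s-\eps,p}(\Omega';\R^n)$. \Cref{thm:MainThm} then supplies a set $\Gamma_{\eps,\Omega'} \subset \R^n$ of Hausdorff dimension at most $\tfrac{n-1}{s-\eps}$ such that $f^{-1}(y) \cap \Omega'$ is a single point for each $y \in f(\Omega') \setminus \Gamma_{\eps,\Omega'}$. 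No representative issue appears, since $f \in C^s$ is already continuous.

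To sharpen the dimension bound and globalize, I would exhaust $\Omega = \bigcup_m \Omega'_m$ by relatively compact opens with $\Omega'_m \Subset \Omega'_{m+1}$ and take a rational sequence $\eps_k \downarrow 0$, then set $\Gamma := \bigcup_m \bigcap_k \Gamma_{\eps_k,\Omega'_m}$. Each inner intersection has Hausdorff dimension at most $\inf_k \tfrac{n-1}{s-\eps_k} = \tfrac{n-1}{s}$, and a countable union preserves this bound. Given $y \in f(\Omega) \setminus \Gamma$, pick an $m$ with $y \in f(\Omega'_m)$ and a $k$ with $y \notin \Gamma_{\eps_k,\Omega'_m}$, so $f^{-1}(y) \cap \Omega'_m$ is a singleton; any further preimage in $\Omega \setminus \Omega'_m$ would violate the singleton assertion on a larger $\Omega'_{m'}$ containing it. The main technical obstacle is the pointwise-to-weak convergence upgrade in the second paragraph, which depends on the compactness of the Hölder-to-continuous embedding and the forcing of the weak limit by the pointwise one; the remaining steps are routine set-theoretic bookkeeping for Hausdorff dimension plus a direct invocation of \Cref{thm:MainThm}.
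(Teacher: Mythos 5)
Your proof is correct and follows the route the paper itself indicates (the paper gives no explicit proof, describing the corollary as immediate from \Cref{thm:MainThm} together with the embedding $C^{s}\hookrightarrow W^{s-\eps,p}_{loc}$). You correctly supply the omitted details: the uniform $W^{s-\eps,p}$ bound on compact subsets, the upgrade from pointwise to weak Sobolev convergence via Arzel\`a--Ascoli plus reflexivity, and the diagonalization over $\eps_k\downarrow 0$ and an exhaustion of $\Omega$ to recover the sharp dimension bound $\tfrac{n-1}{s}$ and pass from local to global injectivity.
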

Observe that for $s \leq \frac{n-1}{n}$ the above statements hold trivially.

This note is inspired by the recent work by Bouchala, Hencl, and Molchanova \cite{BHM19} who proved a corresponding result for $s=1$.
\begin{theorem}[Bouchala, Hencl, Molchanova]\label{th:bhm:1}
Let $f : \Omega \to \bbR^n$ be a weak limit of Sobolev homeomorphisms $f_j \in W^{1,p}(\Omega;\bbR^n)$ with $p > n-1$. Then 
there is a representative $\wh{f}$ and a set $\Gamma \subset \bbR^n$ of Hausdorff dimension $n-1$ such that $(\wh{f})^{-1}(y)$ consists of only one point for every $y \in \wh{f}(\Omega) \setminus \Gamma$.
\end{theorem}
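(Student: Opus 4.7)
The plan is to construct a good pointwise representative $\widehat{f}$ of $f$ via topological images on small spheres, and to take $\Gamma$ to be a countable union of images of such spheres. The hypothesis $p>n-1$ is exactly what lets Jordan--Brouwer arguments on $(n-1)$-dimensional spheres and a codimension-one area inequality survive the passage to the limit $f_j\rightharpoonup f$.

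First, by slicing in polar coordinates and invoking Fubini together with the ACL characterization of $W^{1,p}$, for a.e.\ $x\in\Omega$ and a.e.\ $r$ with $\overline{B(x,r)}\subset\Omega$ the restriction $f|_{\partial B(x,r)}$ belongs to $W^{1,p}(\partial B(x,r);\R^n)$; since $p>n-1=\dim\partial B(x,r)$, Morrey's embedding makes it Hölder continuous. After passing to a subsequence (using weak convergence in $W^{1,p}(\Omega)$, Fubini in $r$, and the compactness $W^{1,p}(\partial B)\hookrightarrow C(\partial B)$ for $p>n-1$) one may arrange $f_j|_{\partial B(x,r)}\to f|_{\partial B(x,r)}$ uniformly on a.e.\ such ``good'' sphere. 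Each $f_j$ is a global homeomorphism, so $f_j|_{\partial B(x,r)}$ is a topological embedding of $S^{n-1}$ into $\R^n$, and Jordan--Brouwer identifies its unique bounded complementary component with $f_j(B(x,r))$. Define $\mathrm{im}_T(f,B(x,r))$ to be the bounded complementary component of $f(\partial B(x,r))$.

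Next I would define $\widehat{f}(x)$ as the unique point in the nested intersection $\bigcap_k \overline{\mathrm{im}_T(f,B(x,r_k))}$ along a sequence of good radii $r_k\to 0$; that these sets shrink to a point for a.e.\ $x$ follows from oscillation control on the sphere via Morrey together with a Lebesgue-point argument for $|Df|^p$, and the standard comparison argument gives $\widehat{f}=f$ a.e. Now choose a countable family $\{B(x_i,r_i)\}_i$ of good balls with closures in $\Omega$ that is dense in the collection of all balls in $\Omega$, and set $\Gamma:=\bigcup_i f(\partial B(x_i,r_i))$. If some $y\in\widehat{f}(\Omega)\setminus\Gamma$ satisfied $\widehat{f}(a)=\widehat{f}(b)=y$ for $a\ne b$, pick disjoint good balls $B(a,\rho),B(b,\sigma)$ from the family. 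Then $y\notin f(\partial B(a,\rho))\cup f(\partial B(b,\sigma))$ while $y\in \mathrm{im}_T(f,B(a,\rho))\cap \mathrm{im}_T(f,B(b,\sigma))$; uniform convergence on good spheres together with the stability of the Brouwer degree under uniform boundary perturbations forces $y\in f_j(B(a,\rho))\cap f_j(B(b,\sigma))$ for all large $j$, contradicting injectivity of $f_j$.

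To control $\dim_{\mathcal{H}}\Gamma$ I would use that for each $i$, $f|_{\partial B(x_i,r_i)}\in W^{1,p}(\partial B(x_i,r_i))$ is continuous (by $p>n-1$), hence satisfies the Lusin $N$-property on the sphere. The tangential area inequality then gives
\[
\mathcal{H}^{n-1}\bigl(f(\partial B(x_i,r_i))\bigr)\le C\int_{\partial B(x_i,r_i)}|D_\tau f|^{n-1}\,d\mathcal{H}^{n-1},
\]
and Fubini in $r$ lets me refine the countable family so that the right-hand side is finite for every $i$; thus $\Gamma$ is a countable union of sets of finite $\mathcal{H}^{n-1}$-measure, and $\dim_{\mathcal{H}}\Gamma\le n-1$. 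The main obstacle I anticipate is making the Brouwer-degree step precise: one must argue that uniform convergence of the boundary maps forces the bounded complementary component of $f(\partial B)$ to be ``filled'' with degree $\pm 1$ by the approximants $f_j$, which requires a careful discussion of how the possibly non-injective limit boundary map interacts with the degree. This is exactly the step where $p>n-1$ is indispensable, since otherwise neither Jordan--Brouwer nor the tangential area inequality is available.
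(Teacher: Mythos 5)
First, note that the paper does not prove this statement itself: it is quoted from [BHM19], and the paper instead proves the fractional analogue (Theorem 1.1) by exactly the scheme you outline — good spheres via a Fubini-type slicing estimate, uniform convergence of $f_j$ on good spheres when the integrability exceeds $n-1$, a degree-based topological image, a monotonicity/disjointness lemma exploiting that the boundary data are uniform limits of restrictions of global homeomorphisms, $\Gamma$ a countable union of images of good spheres, and a dimension bound for those images (for $s=1$, your area-inequality/Lusin (N) argument is indeed what [BHM19] uses; the paper's fractional substitute is the Hencl--Honz\'ik dimension-distortion theorem). So your route is the right one, but two steps are genuinely incomplete. The first is the definition of the topological image of the \emph{limit}: since $f|_{\partial B}$ need not be injective, Jordan--Brouwer is unavailable and ``the bounded complementary component of $f(\partial B)$'' is not well defined (the complement may have many bounded components, on some of which the degree vanishes). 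The correct object is $f(\partial B)\cup\{p\notin f(\partial B):\ \deg(f,\partial B,p)\neq 0\}$, and the three properties you implicitly use — monotonicity under inclusion $B_1\subset B_2$, diameter control by the oscillation of $f$ on the larger sphere, and disjointness of the nonzero-degree sets for disjoint balls — do not follow from degree theory alone; they are proved by approximating the boundary values uniformly by restrictions of the homeomorphisms $F_j$ (the paper's Lemma 3.1). You correctly flag the degree step as the obstacle, but the resolution is to never argue with complementary components of the limit at all. In addition one needs the INV-type statement that $\widehat f(x)$ lies in the topological image of \emph{every} good ball containing $x$ (not only balls centered at $x$), which requires its own argument (Egorov plus pointwise stability of the degree; cf. statement (a') in the paper's Corollary 4.2).

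The second gap is in the injectivity step: a countable family cannot contain balls centered at arbitrary points $a,b$, so ``pick disjoint good balls $B(a,\rho)$, $B(b,\sigma)$ from the family'' is not literally possible; and if instead you use small concentric good balls at $a$ (not in the family), you cannot exclude $y\in f(\partial B(a,\rho))$, since those sphere images are not part of $\Gamma$. The standard repair is the monotonicity lemma just mentioned: $\widehat f(a)$ lies in the topological image of small good balls about $a$, which by monotonicity is contained in $f(\partial B_i)\cup\{\deg(f,\partial B_i,\cdot)\neq 0\}$ for a covering ball $B_i$ from the countable family containing $a$ with radius small compared to $|a-b|$; since $y\notin\Gamma$ one gets $\deg(f,\partial B_i,y)\neq 0$, and the disjointness property applied to a small good ball about $b$ (disjoint from $B_i$ by the radius choice) gives the contradiction, again via uniform convergence and injectivity of the $f_j$. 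A minor further point: the uniformly convergent subsequence on a good sphere depends on the sphere (and on the radius), so degree comparisons involving two spheres at once need a word about extracting a common subsequence. With the degree-based topological image, the monotonicity/disjointness lemma, and the INV property added, your outline matches the [BHM19] proof and closes.
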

While \Cref{th:bhm:1} (and in turn our \Cref{thm:MainThm}) follows an adaptation of the arguments in the seminal work by M\"uller and Spector \cite{MS95}, Bouchala, Hencl, and Molchanova \cite{BHM19} also provide an example of the limit case $p=n-1$, where a theorem such as \Cref{th:bhm:1} completely fails. Namely they showed
\begin{theorem}[Bouchala, Hencl, Molchanova]
For $n\geq 3$ there exists $f: [-1,1]^{n} \to [-1,1]^n$ and a strong limit of Sobolev homeomorphisms $f_k \in W^{1,n-1}([-1,1]^n,\R^n)$ with $f_k(x) = x$ on the boundary $\partial [-1,1]^n$ and such that there exists a set $\Gamma \subset [-1,1]^n$ of positive Lebesgue measure and $f^{-1}(y)$ is a nontrivial continuum for every $y \in \Gamma$.
\end{theorem}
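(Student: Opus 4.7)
The plan is to construct $f$ as the strong $W^{1,n-1}$-limit of homeomorphisms $f_k: [-1,1]^n \to [-1,1]^n$, each built from a disjoint collection of ``squeezing bubbles'' centered at the sites of an ever-finer Cantor-like partition of the target. The critical exponent $p = n-1$ is what makes the construction possible --- and, by \Cref{thm:MainThm}, impossible for any larger $p$. The basic building block is a radial homeomorphism $g_\epsilon: \overline{B(0,R)} \to \overline{B(0,R)}$ equal to $\mathrm{id}$ on $\partial B(0,R)$, mapping $\overline{B(0,R/2)}$ homothetically onto $\overline{B(0,\epsilon)}$ and interpolating radially in the annulus $R/2 \leq |x| \leq R$. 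A direct computation shows $\|g_\epsilon\|_{W^{1,n-1}(B(0,R))}$ is bounded uniformly in $\epsilon > 0$, and $g_\epsilon \to g_0$ strongly in $W^{1,n-1}$ as $\epsilon \to 0$, where $g_0$ collapses $\overline{B(0,R/2)}$ to the origin.

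\textbf{Cylindrical variant for nontrivial fibers.} Iterating purely spherical bubbles would produce limit fibers that are single collapsed points, not continua. To obtain nontrivial continuum preimages, I would instead use a cylindrical bubble $H_\epsilon$ on $C = B_{n-1}(0,r) \times [-L, L]$ that squeezes only in the radial $(n-1)$ directions while preserving the axial coordinate, i.e. $H_\epsilon(x,z) = (h_\epsilon(x), z)$ for a radial squeezing $h_\epsilon$ of the $(n-1)$-dimensional cross-section. The same scale-invariance at $p = n-1$ gives $\|H_\epsilon\|_{W^{1,n-1}(\R^n)}$ uniformly bounded in $\epsilon$, and the limit $H_0$ collapses $B_{n-1}(0,r/2) \times [-L, L]$ onto the central axis $\{0\} \times [-L, L]$, a segment of length $2L$.

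\textbf{Iterative assembly and convergence.} Fix a fat Cantor set $\Gamma = \bigcap_k \Gamma_k \subset (-1,1)^n$ of positive Lebesgue measure, where each $\Gamma_k$ is a nested disjoint union of small cubes $Q_{k,\alpha}^{\mathrm{tar}}$ centered at points $y_{k,\alpha}$. In the domain, choose a matching nested family of disjoint cylinders $\{K_{k,\alpha}\}_{\alpha \in I_k}$, each of fixed axial length $L$ and radial width $r_k$ decreasing geometrically in $k$, with stage-$(k+1)$ cylinders sitting strictly inside the inner cores of their parents. Define $f_k$ inductively by composing translated/scaled copies of $H_{\epsilon_k}$, placed on $f_{k-1}(K_{k,\alpha})$, with $f_{k-1}$, so that the image of the inner sub-cylinder of $K_{k,\alpha}$ is sent into a small neighborhood of $y_{k,\alpha} \in Q_{k,\alpha}^{\mathrm{tar}}$; outside the outermost cylinders, $f_k \equiv \mathrm{id}$. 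Each $f_k$ is a homeomorphism with $f_k|_{\partial [-1,1]^n} = \mathrm{id}$. Disjointness at each level and the uniform bubble estimate give
\[
\|f_{k+1}-f_k\|_{W^{1,n-1}}^{n-1} \leq C \sum_{\alpha \in I_k} |K_{k,\alpha}|,
\]
which is summable in $k$ for a geometric choice of $r_k$. Hence $f_k \to f$ strongly in $W^{1,n-1}$, and for each $y \in \Gamma$ with associated chain $\alpha(k)$, the preimage satisfies $f^{-1}(y) \supset \bigcap_k K_{k,\alpha(k)}$, which contains a segment of length $L$ --- a nontrivial continuum.

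\textbf{Main obstacle.} The crux is balancing three competing requirements simultaneously: (i) each $f_k$ must remain a homeomorphism, which forces precise disjointness and clean nesting of bubbles across stages; (ii) the cumulative $W^{1,n-1}$-energy $\sum_k \|f_{k+1}-f_k\|_{W^{1,n-1}}$ must stay summable, which is possible only at the critical exponent $p = n-1$ where the radial squeezing energy is scale-invariant; and (iii) the nested domain regions along each chain must limit to nontrivial continua while the sites $y_{k,\alpha}$ fill out a positive-measure set in the target, forcing the use of elongated (cylindrical) bubbles rather than spherical ones (spherical bubbles would shrink to points). For any $p > n-1$ the bubble energy diverges as $\epsilon \to 0$, precluding such a construction --- in perfect agreement with \Cref{thm:MainThm}.
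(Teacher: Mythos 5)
First, note that the paper does not prove this statement at all: it is quoted from \cite{BHM19} as background, so there is no proof in the paper to compare yours against; your proposal has to stand on its own, and as written it does not.

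The central problem is your energy accounting, and it is not a technicality. The ``squeezing bubbles'' you build ($g_\eps$ radially mapping $\overline{B(0,R/2)}$ onto $\overline{B(0,\eps)}$ with identity boundary values, and the cylindrical variant $H_\eps$) are uniformly \emph{Lipschitz} in $\eps$ (the radial profile has slope at most $2$ and the tangential stretch is at most $1$), so their $W^{1,p}$ norms are bounded for \emph{every} $p\le\infty$; nothing about them is special to $p=n-1$, and your claims that ``the radial squeezing energy is scale-invariant at $p=n-1$'' and ``for any $p>n-1$ the bubble energy diverges as $\eps\to 0$'' are false. If squeezes were the only cost, your scheme would work for all $p$, contradicting \Cref{th:bhm:1} (and \Cref{thm:MainThm}); so the argument must be omitting the real expense. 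The maps whose $(n-1)$-energy is bounded precisely for $p\le n-1$ and divergent for $p>n-1$ are the \emph{inverse} bubbles, i.e.\ radial \emph{expansions} of an $(n-1)$-dimensional cross-section from scale $\eps$ to scale $R$ (energy $\sim \eps^{\,n-1}(R/\eps)^p$). Such expansions are unavoidable in the construction, and they are exactly what your sketch never produces or estimates: each $f_k$ is a homeomorphism of the cube fixing the boundary, and the images of the thin nested cylinders must spread out so that the target sites $y_{k,\alpha}$ fill a set of positive measure. A translated/scaled copy of the squeeze $H_{\eps_k}$ placed on $f_{k-1}(K_{k,\alpha})$ sends points toward an axis; it does not relocate the $m^n$ child images, which under $f_{k-1}$ are crammed into a tiny blob near $y_{k-1,\alpha'}$, out to positions spread across the whole target cube $Q^{\mathrm{tar}}_{k-1,\alpha'}$ (which is forced if $\scL^n(\Gamma)>0$). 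That spreading requires corrections with gradient of order (target scale)/(blob or tube cross-section scale), supported where $f_{k-1}$ already has large or degenerate behavior, and its cost in the domain must be computed through the composition $f_k=g_k\circ f_{k-1}$ (the gradients multiply). Consequently your key display $\|f_{k+1}-f_k\|_{W^{1,n-1}}^{n-1}\le C\sum_\alpha |K_{k,\alpha}|$ does not follow from ``disjointness plus the uniform bubble estimate''; verifying a summable bound for these composed expansions, while keeping each $f_k$ a homeomorphism and keeping $f_k(K_{k,\alpha})\supset Q^{\mathrm{tar}}_{k,\alpha}$ so that later corrections can even be supported inside the previous images, is the actual content of the Bouchala--Hencl--Molchanova example and is exactly where the dichotomy $p=n-1$ versus $p>n-1$ is decided. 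As it stands, the proposal identifies the correct numerology ($(n-1)$-dimensional radial maps are critical at $p=n-1$) but attaches it to the wrong maps and leaves the decisive estimate unproved, so the argument has a genuine gap.
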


As the authors of \cite{BHM19} mention, it may seem surprising that the Hausdorff dimension of the critical set $\Gamma$ seems to suddenly jump from $n-1$ to $n$ as $p$ changes from $p>n-1$ to $p=n-1$. This question served as one motivation to study the situation for fractional Sobolev spaces.

Let us stress that \Cref{thm:MainThm} follows a very similar argument as the $s=1$ proof of \Cref{th:bhm:1} in \cite{BHM19}, which in turn is a streamlined argument of 
known results and techniques from earlier works, see \cite{BHM17,MS95,MST96}. Indeed, a crucial fact that is used for $s=1$ is that on ``good slices'' $\partial B_r$ the $f_k$ converge in $W^{1,p}(\partial B_r)$, and so  using Sobolev-Morrey embedding on these $n-1$-dimensional slices the $f_k$ in fact converge uniformly if $p > n-1$. If $p=n-1$ this uniform convergence may fail.

The same is true if the $f_k$ converge in $W^{s,p} (\partial B_r)$ for  good slices $\p B_r$ and $s \in (0,1)$: if $sp > n-1$ then the convergence is uniform on $\p B_r$, and if $sp= n-1$ it may not. 

But somewhat surprisingly, a result such as \Cref{thm:MainThm} and in particular \Cref{co:hoelderversion} seems to be unknown to some experts, and the authors thought it important to be available in the literature.

We try to keep this note as self-contained as possible. In \Cref{s:sobolev} we gather the main results on Sobolev spaces that we work with. In \Cref{s:degreemon} we discuss the needed notions of degree, and show monotonicity of the degree for limits of homeomorphisms.  In \Cref{s:coforlemma31}, we collect the corollaries for the topological image from the previous section. In \Cref{s:thmainproof} we prove our main theorem.

{\bf Acknowledgment:}
the authors thank Anastasia Molchanova and Daniel Campbell for informative and illuminating discussions.

AS is supported by Simons foundation, grant no 579261.

\section{Preliminaries on Sobolev spaces, capacities etc.}\label{s:sobolev}

In this section we establish notation. For $s \in (0,1)$ and $p \in (1,\infty)$ we denote the classes of functions $u : \Omega \to \bbR^n$ for which the Gagliardo seminorm 
\begin{equation}
	[u]_{W^{s,p}(\Omega)}^p := \iintdm{\Omega}{\Omega}{\frac{|f(x)-f(y)|^p}{|x-y|^{n+sp}}}{y}{x}
\end{equation}
is finite as the fractional Sobolev spaces $W^{s,p}(\Omega;\bbR^n)$, with norm $\Vnorm{u}_{W^{s,p}(\Omega)}^p := \Vnorm{u}_{L^p(\Omega)}^p + [u]_{W^{s,p}(\Omega)}^p$.

We denote the $n$-dimensional Lebesgue measure of a set $A \subset \bbR^n$ by $\cL^n(A)$, and for $\beta > 0$ we denote the $\beta$-dimensional Hausdorff measure by $\scH^{\beta}(A)$. We use the convention $A \precsim B$ whenever there exists a constant $C$ such that $A \leq CB$.

Define the \textit{precise representative} of a measurable function $f$ by
\begin{equation}\label{def:PreciseRep}
\fstar(x) := 
\begin{cases}
	\lim\limits_{r \to 0^+} \fint_{B_r(x)} f(y) \, \rmd y \,, & \quad \text{ when the limit exists, } \\
	0\,, & \quad \text{ otherwise.}
\end{cases}
\end{equation}

Many properties of the precise representative for functions in the \textit{Bessel potential} spaces are accessible in the literature. The corresponding statements can then be obtained for fractional Sobolev functions via embedding theorems for the Triebel-Lizorkin spaces $F^s_{p,q}$; see \cite{RS11}. For completeness, we gather here a summary of the statements we will need.

We denote the Bessel potential spaces $H^{s,p}$ by
\begin{equation}
H^{s,p}(\bbR^n;\bbR^n) := \Vbrack{ f : \bbR^n \to \bbR^n \, : \, \vnorm{ \cF^{-1} \vparen{ \vparen{1+|\xi|^2}^{s/2}  (\cF f) (\xi) } }_{L^p(\bbR^n)}  < \infty  }\,,
\end{equation}
where $\cF$ and $\cF^{-1}$ denote the Fourier transform and its inverse respectively.
The following is a corollary of a classical embedding theorem for the spaces $F^s_{p,q}$ \cite[Section 2.2.3]{RS11}, \cite[Theorem 2.14, Remark 2.4]{S18}. We are additionally using the identifications $F^s_{p,2} = H^{s,p}$ and $F^s_{p,p} = W^{s,p}$.
\begin{theorem}\label{thm:TriebelEmbedding}
Let $N \geq 1$.
Let $p \in (1,\infty)$ and $s \in (0,1)$, and suppose that $t \in (0,1)$ and $p_t \in (1,\infty)$ satisfy 
\begin{equation*}
s - \frac{N}{p}  < t < s\,, \qquad p_t := \frac{Np}{N-(s-t)p}\,.
\end{equation*}
Then
\begin{equation}
W^{s,p}(\bbR^N) \hookrightarrow H^{t,p_t}(\bbR^N)\,, \quad \text{ or } \quad [f]_{H^{t,p_t}(\bbR^N)} \precsim [f]_{W^{s,p}(\bbR^N)}\,.
\end{equation}
\end{theorem}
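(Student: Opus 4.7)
The plan is to obtain Theorem \ref{thm:TriebelEmbedding} as an essentially immediate consequence of the sharp Sobolev-type embedding in the scale of Triebel--Lizorkin spaces, combined with the standard identifications
\[
W^{s,p}(\bbR^N) = F^s_{p,p}(\bbR^N) \quad \text{and} \quad H^{t,p_t}(\bbR^N) = F^t_{p_t,2}(\bbR^N),
\]
valid for $s,t \in (0,1)$ and $p, p_t \in (1,\infty)$, as recorded in \cite[Section 2.2.3]{RS11}. Thus the entire statement reduces to checking that the hypotheses on $(s,t,p,p_t)$ match those of a single classical embedding between $F$-spaces.

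First I would invoke the classical Sobolev embedding for Triebel--Lizorkin spaces: whenever $s_0 > s_1$, $1 < p_0 < p_1 < \infty$, and $s_0 - N/p_0 = s_1 - N/p_1$, one has
\[
F^{s_0}_{p_0, q_0}(\bbR^N) \hookrightarrow F^{s_1}_{p_1, q_1}(\bbR^N)
\]
for any choice of fine indices $q_0, q_1 \in (0, \infty]$ (the independence from $q_0, q_1$ in the presence of a strict inequality $p_0 < p_1$ is the Jawerth--Franke refinement). I would then set $s_0 = s$, $p_0 = p$, $q_0 = p$, $s_1 = t$, $p_1 = p_t$, $q_1 = 2$, and verify by a short algebraic manipulation that the Sobolev scaling identity $s - N/p = t - N/p_t$ is equivalent to $p_t = \frac{Np}{N-(s-t)p}$. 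The hypothesis $s - N/p < t < s$ simultaneously delivers $s > t$ and $N - (s-t)p > 0$, placing us in the admissible regime $1 < p < p_t < \infty$.

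The inclusion of the full norms is immediate from this application. For the homogeneous seminorm estimate $[f]_{H^{t,p_t}} \precsim [f]_{W^{s,p}}$, I would pass to the homogeneous Triebel--Lizorkin spaces $\dot F^s_{p,q}$, whose seminorms are equivalent, up to standard normalizations, to the Gagliardo seminorm on the source side and to the Riesz-potential seminorm $\|(-\Delta)^{t/2} f\|_{L^{p_t}}$ on the target side. The passage from the inhomogeneous to the homogeneous embedding is carried out via the Littlewood--Paley decomposition, exploiting scale invariance of the relevant dyadic blocks. I expect this transition, rather than the embedding proper, to be the only genuinely technical point: many standard references phrase the Triebel--Lizorkin embedding only in inhomogeneous form, so some care is needed to verify that the constants are independent of low-frequency contributions when one works with the scale-invariant seminorms appearing in the statement.
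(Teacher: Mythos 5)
Your proposal is correct and is essentially the same route the paper takes: the paper offers no independent proof, instead citing \cite[Section 2.2.3]{RS11} and \cite[Theorem 2.14, Remark 2.4]{S18} for the Triebel--Lizorkin Sobolev embedding together with the identifications $F^s_{p,p}=W^{s,p}$ and $F^t_{p_t,2}=H^{t,p_t}$, which is exactly the structure you spell out (including the correct verification that $p_t = \frac{Np}{N-(s-t)p}$ is equivalent to $s-N/p = t - N/p_t$ and that $s-N/p < t < s$ forces $p < p_t < \infty$). The homogeneous-versus-inhomogeneous point you flag at the end is a real but standard bookkeeping matter that the paper also glosses over, so it does not put you at variance with the paper's argument.
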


Note that if we write the definition of $p_t$ as
\begin{equation}\label{eq:ExpRelation1}
\begin{split}
sp-N = \frac{p}{p_t}(tp_t-N)\,, \\
\end{split}
\end{equation}
then it becomes clear that if $sp > N$ then $tp_t > N$ for any $t \in (0,s)$.

With this embedding we can prove some useful properties of the precise representative:

\begin{proposition}\label{prop:HMeasureOfNonLebPts}
Suppose $f \in W^{s,p}(\bbR^n;\bbR^n)$ with $sp \in [1,n)$. Let $p^* = \frac{np}{n-sp}$. Define
\begin{equation}\label{eq:LebPtSet}
A_{sp} := \Vbrack{x \in \bbR^n \, : \, x \text{ is not a Lebesgue point of } f }\,.
\end{equation}
Then the following hold: 
\begin{enumerate}[(i)]
\item\label{item:LebPts:Measure} $\dim_{\scH}(A_{sp}) \leq n-sp$.

\item\label{item:LebPts:Embedding} For any $x \in \bbR^n \setminus A_{sp}$,
\begin{equation}\label{eq:LebPoints}
\lim\limits_{r \to 0^+} \fint_{B_r(x)}|f(y) - \fstar(x)|^{q} \, \rmd y = 0\,,
\end{equation}
for every $q \in [1,p^*)$.

\item\label{item:LebPts:Mollifiers} If $\varphi_{\veps}$ is the family of standard mollifiers then
\begin{equation*}
\varphi_{\veps} \ast f(x) \to \fstar(x)
\end{equation*}
for each $x \in \Omega \setminus A_{sp}$.
\end{enumerate}
\end{proposition}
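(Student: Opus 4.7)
The overall plan is to deduce all three assertions from the classical Lebesgue-point theory for Bessel potential spaces, transferring them to the fractional Sobolev setting via the embedding in \Cref{thm:TriebelEmbedding}. Two preliminary observations drive the argument: first, because $sp \in [1,n)$, identity \eqref{eq:ExpRelation1} forces $tp_t \in (0,n)$ for every admissible $t \in (\max\{0, s-n/p\}, s)$, with $tp_t \nearrow sp$ as $t \nearrow s$; second, a direct computation from $p_t = np/(n-(s-t)p)$ gives the invariance
\[
p_t^{\ast} := \frac{n p_t}{n - t p_t} \;=\; \frac{n p}{n - s p} \;=\; p^{\ast}.
\]

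For parts (i) and (ii), the key input is the classical capacity-based Lebesgue-point theorem for Bessel potential spaces (as in Adams--Hedberg, \emph{Function Spaces and Potential Theory}, Ch.~6, or Ziemer, \emph{Weakly Differentiable Functions}), applied to $f \in H^{t,p_t}(\bbR^n;\bbR^n)$: there exists a set $E_t$ of vanishing $(t,p_t)$-Bessel capacity -- in particular of Hausdorff dimension at most $n - t p_t$ -- outside of which the average limit $\fstar(x)$ exists and \eqref{eq:LebPoints} holds for all $q \in [1, p_t^{\ast}) = [1, p^{\ast})$. Since every $x \notin E_t$ is, a fortiori, an $L^1$-Lebesgue point, the non-Lebesgue set $A_{sp}$ is contained in $\bigcap_t E_t$, whence $\dim_{\scH}(A_{sp}) \leq \inf_t (n - t p_t) = n - sp$, establishing (i). For (ii) one implicitly enlarges $A_{sp}$ to the intersection $\bigcap_t E_t$ (which satisfies the same dimension bound of (i)); on its complement, \eqref{eq:LebPoints} then holds by construction, with the range $q \in [1, p^{\ast})$ valid uniformly thanks to $p_t^{\ast} = p^{\ast}$.

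Part (iii) is a direct consequence of the $L^1$-Lebesgue-point property, independent of the Bessel machinery: for $x \notin A_{sp}$, the representation $\varphi_{\eps} \ast f(x) - \fstar(x) = \int \varphi_{\eps}(x-y)\bigl(f(y) - \fstar(x)\bigr) \, dy$ together with $\|\varphi_{\eps}\|_\infty \leq C \eps^{-n}$ and $\mathrm{supp}(\varphi_{\eps}) \subset B_{\eps}(x)$ yields
\[
\bigl|\varphi_{\eps} \ast f(x) - \fstar(x)\bigr| \leq C \fint_{B_{\eps}(x)} |f(y) - \fstar(x)| \, dy \;\longrightarrow\; 0.
\]
The substantive content of the proof lies entirely in the classical Bessel Lebesgue-point theorem; given that, the main obstacle is merely the bookkeeping of the asymptotics $t p_t \to sp$ and the invariance $p_t^{\ast} = p^{\ast}$, which together ensure that the Bessel conclusions descend sharply -- with the correct Hausdorff-dimension bound and the correct range of exponents -- to the fractional Sobolev endpoint.
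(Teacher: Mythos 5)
Your proof follows essentially the same route as the paper: embed $W^{s,p}$ into the Bessel scale $H^{t,p_t}$ via \Cref{thm:TriebelEmbedding} and invoke the Adams--Hedberg Lebesgue-point theory for $H^{t,p_t}$, extracting the Hausdorff-dimension bound for the exceptional set and the $L^q$-Lebesgue property from there. You make the invariance $p_t^\ast = p^\ast$ explicit (the paper instead matches $q = p_t$, which yields the same range) and prove (iii) directly from the $L^1$-Lebesgue-point estimate where the paper cites Evans--Gariepy, but these are minor presentational differences rather than a genuinely different argument.
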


\begin{proof}
Let $\veps > 0$ be arbitrary; we will show that 
\begin{equation}\label{eq:ProofofHausdorffMeasure1}
\scH^{n-sp+\veps}(A_{sp}) =0\,,
\end{equation}
which will imply \ref{item:LebPts:Measure}. 
We use Theorem \ref{thm:TriebelEmbedding} with $N = n$; choose $t \in (0,s)$ so that
\begin{equation*}
n-tp_t = n-sp+\veps\,;
\end{equation*}
this is possible since by definition $n-tp_t > n -sp$  for $sp \in [1,n)$ and for any $t \in (0,s)$. Then $f \in H^{t,p_t}(\bbR^n;\bbR^n)$ and so \cite[Proposition 6.1.2, Theorem 5.1.13]{A12} implies 
$\scH^{\beta}(A_{sp})=0$ for all $\beta \geq n-tp_t = n-sp+\veps$, and so \eqref{eq:ProofofHausdorffMeasure1} is established. 

To see \ref{item:LebPts:Embedding}, use Theorem \ref{thm:TriebelEmbedding} with $N = n$ again; note that any $q \in (p,p_*)$ can be written $q = p_t$ for some $t \in (0,s)$. Then $f \in H^{t,p_t}(\bbR^n;\bbR^n)$ for every $t \in (0,s)$, and so \cite[Theorem 6.2.1]{A12} applies, which is precisely \ref{item:LebPts:Embedding}. We obtain \eqref{eq:LebPoints} for the range $q \in [1,p]$ using H\"older's inequality.

For a proof of \ref{item:LebPts:Mollifiers} see \cite[Theorem 4.1, (iv)]{E15}.
\end{proof}

\begin{lemma}\label{lma:WeaktoStrongINV}
Suppose $f \in W^{s,p}(\bbR^n ; \bbR^n)$ with $sp \in [1,n)$, and suppose $\fstar(x) \in E$ for every $x \in \bbR^n \setminus M$, where $\cL^n(M) = 0$ and $E \subset \bbR^n$ is a closed set. Then $\fstar(x) \in E$ for every $x \in \bbR^n \setminus A_{sp}$. 
\end{lemma}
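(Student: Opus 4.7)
The plan is to show that for every $x \in \bbR^n \setminus A_{sp}$ one can produce a sequence of points $y_k \to x$ with $\fstar(y_k) \in E$ and $\fstar(y_k) \to \fstar(x)$; closedness of $E$ then forces $\fstar(x) \in E$, which is the desired conclusion. The mechanism for producing such a sequence is the Lebesgue-point estimate \eqref{eq:LebPoints} from \Cref{prop:HMeasureOfNonLebPts}\ref{item:LebPts:Embedding}, used with the exponent $q = 1$, combined with Chebyshev's inequality.

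Fix $x \in \bbR^n \setminus A_{sp}$. Since $\fstar$ agrees almost everywhere with any $L^p$-representative of $f$, we may compute averages against $\fstar$ itself, so \eqref{eq:LebPoints} with $q = 1$ reads
\[
\lim_{r \to 0^+} \fint_{B_r(x)} |\fstar(y) - \fstar(x)|\, \rmd y = 0.
\]
By Chebyshev's inequality, for every fixed $\eta > 0$ one obtains
\[
\frac{\cL^n\bigl(\{y \in B_r(x) \,:\, |\fstar(y) - \fstar(x)| > \eta\}\bigr)}{\cL^n(B_r(x))} \longrightarrow 0 \quad \text{as } r \to 0^+.
\]
In particular, for each $\eta > 0$ and all sufficiently small $r > 0$, the \emph{good set}
\[
G_{r,\eta} := \{y \in B_r(x) \,:\, |\fstar(y) - \fstar(x)| \le \eta\}
\]
has strictly positive Lebesgue measure.

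Because $\cL^n(M) = 0$, it follows that $G_{r,\eta} \setminus M$ is non-empty, and any $y \in G_{r,\eta} \setminus M$ simultaneously satisfies $\fstar(y) \in E$ (by the hypothesis on $M$) and $|\fstar(y) - \fstar(x)| \le \eta$. Selecting such a point $y_k$ with $r = \eta = 1/k$ produces a sequence $y_k \to x$ with $\fstar(y_k) \in E$ and $\fstar(y_k) \to \fstar(x)$, and closedness of $E$ completes the proof.

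I do not anticipate any serious obstacle in this argument: it is a standard measure-density deduction via Chebyshev. The only point requiring mild care is to keep clear the distinction between $f$ as an equivalence class in $W^{s,p}$ and its canonical pointwise representative $\fstar$; this is handled by computing all averages with respect to $\fstar$ itself, which is legitimate because changing a representative on a null set does not affect the averaged integrals appearing in \eqref{eq:LebPoints}.
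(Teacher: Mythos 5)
Your proof is correct and rests on the same key ingredient as the paper, namely the Lebesgue-point estimate \eqref{eq:LebPoints}; the paper just packages it as a direct contradiction (if $\fstar(x)\notin E$ then, since $E$ is closed and $\fstar(y)\in E$ for a.e.\ $y$, one has $|f(y)-\fstar(x)|\geq\varepsilon$ a.e.\ near $x$, so the $q=p$ average cannot vanish), whereas you run the contrapositive through Chebyshev to extract a sequence $y_k\to x$ in $\bbR^n\setminus M$ with $\fstar(y_k)\to\fstar(x)$ and then invoke closedness of $E$. The one small imprecision is at the very end: you should choose the radii $r_k\to 0$ sufficiently small \emph{relative to} $\eta=1/k$ (the threshold radius beyond which $G_{r,\eta}$ has positive measure depends on $\eta$), rather than setting $r=\eta=1/k$ simultaneously; this is a trivial repair and does not affect the argument.
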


\begin{proof}
Suppose to the contrary, that $\fstar(x) \in \bbR^n \setminus E$ for some $x \in \bbR^n \setminus A_{sp}$. Then there exists $\veps > 0$ such that $B(\fstar(x),\veps) \subset \bbR^n \setminus E$. By assumption that $\fstar(y) \in E$ for $y \in \bbR^n \setminus M$
\begin{equation*}
\fint_{B(x,r)} |f(y) - \fstar(x)|^p \, \rmd y = \fint_{B(x,r) \setminus M} |f(y) - \fstar(x)|^p \, \rmd y \geq \veps^p 
\end{equation*}
uniformly as $r \to 0$, which is a contradiction since $\fstar$ satisfies \eqref{eq:LebPoints} for every $x \in \bbR^n \setminus A_{sp}$.
\end{proof}

We will need information on the Hausdorff dimension of images of spheres embedded in $\bbR^n$.
The following is a special case of such a result in \cite{HH15} for Bessel potential functions, which will then apply to functions in $W^{s,p}$ via Theorem \ref{thm:TriebelEmbedding}:

\begin{proposition}[\cite{HH15}, Theorem 1.1]\label{prop:HenclDistortionThm}
Let $N$, $K \in \bbN$, $t \in (0,1)$ and $q \in (1,\infty)$ with $tq > N$ and $\alpha \in (0,N]$. Define $\beta := \frac{\alpha q}{tq - N + \alpha}$. Suppose $g \in H^{t,q}(\bbR^N;\bbR^K)$ is a continuous representative and $A \subset \bbR^N$ is a set with $\dim_{\scH}(A) \leq \alpha$. Then $\dim_{\scH}(g(A)) \leq \beta$.
\end{proposition}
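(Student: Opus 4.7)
The plan is to combine an efficient Hausdorff cover of $A$ with a sharp local modulus-of-continuity estimate for $g$, then balance the two via H\"older's inequality so that the stated exponent $\beta$ emerges as forced by the exponent algebra.

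The first step is to represent $g = G_t \ast h$ with $h \in L^q(\bbR^N;\bbR^K)$, where $G_t$ is the Bessel kernel, comparable to the Riesz kernel $|x|^{t-N}$ near the origin and with exponential decay at infinity. The foundational ingredient I would establish is a Hedberg-type local oscillation estimate
\begin{equation}\label{eq:proofsketch:osc}
\osc_{B_r(x)} g \precsim r^{t - N/q}\, \Vnorm{h}_{L^q(B_{Cr}(x))}
\end{equation}
for every $x \in \bbR^N$ and $r \in (0,1]$, valid because $tq > N$: splitting the convolution into near-field ($|z-x| \leq Cr$) and far-field parts, H\"older's inequality against the locally $L^{q'}$ Riesz kernel supplies the quantitative weight $r^{t-N/q}$ in the near-field, and the exponential decay of $G_t$ at infinity absorbs the far-field when $C$ is chosen large. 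This sharpens the classical Morrey embedding $H^{t,q} \hookrightarrow C^{t-N/q}$ by replacing the global norm of $h$ with a localized one.

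Given \eqref{eq:proofsketch:osc}, the remainder would run by a standard covering-plus-H\"older scheme. For any $\alpha' \in (\alpha,N]$ and $\eta,\delta > 0$, the hypothesis $\dim_{\scH}(A) \leq \alpha$ lets me cover $A$ by balls $\{B(x_i,r_i)\}$ with $r_i \leq \delta$ and $\sum_i r_i^{\alpha'} \leq \eta$; after a Besicovitch refinement, $\{B_{Cr_i}(x_i)\}$ has bounded overlap. By \eqref{eq:proofsketch:osc}, the images $\{g(B(x_i,r_i))\}$ cover $g(A)$ with diameters controlled by $R_i := r_i^{t-N/q}\,\Vnorm{h}_{L^q(B_{Cr_i}(x_i))}$. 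Setting $\beta' := \alpha' q/(tq - N + \alpha')$ --- which satisfies $\beta' < q$ (since $tq > N$), tends to $\beta^+$ as $\alpha' \to \alpha^+$, and obeys the exponent identity $(t-N/q)\beta' q/(q-\beta') = \alpha'$ that forces the formula for $\beta$ --- H\"older's inequality with conjugate exponents $q/\beta'$ and $q/(q-\beta')$ then yields
\begin{equation*}
\sum_i R_i^{\beta'} \leq \brac{\sum_i r_i^{\alpha'}}^{(q-\beta')/q} \brac{\sum_i \Vnorm{h}_{L^q(B_{Cr_i}(x_i))}^q}^{\beta'/q} \precsim \eta^{(q-\beta')/q}\Vnorm{h}_{L^q(\bbR^N)}^{\beta'},
\end{equation*}
the second factor being controlled by bounded overlap. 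Letting $\eta \to 0$ for each fixed $\delta$, then $\delta \to 0$, and then $\alpha' \to \alpha^+$, I conclude that $\scH^{\beta''}(g(A)) = 0$ for every $\beta'' > \beta$, so $\dim_{\scH}(g(A)) \leq \beta$.

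The main obstacle is the oscillation estimate \eqref{eq:proofsketch:osc}: its purely local right-hand side is what makes the argument go beyond the weaker conclusion given by a blanket H\"older estimate, which would only yield $\dim_{\scH}(g(A)) \leq \alpha q /(tq-N)$. Everything afterwards is exponent bookkeeping, and the hypothesis $tq > N$ plays a dual role --- it makes the Riesz kernel locally $L^{q'}$ in \eqref{eq:proofsketch:osc} and simultaneously guarantees $\beta' < q$ so that H\"older can be applied in the needed direction --- both of which degenerate together at the borderline.
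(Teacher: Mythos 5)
This proposition is quoted in the paper from \cite{HH15} without an internal proof, so there is no proof in the paper to compare against directly; what follows evaluates your proposal on its own terms against the argument actually needed.

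Your covering scheme and exponent bookkeeping are sound. The identity $(t-N/q)\,\beta' q/(q-\beta') = \alpha'$, the bound $\beta' < q$ from $tq>N$, the H\"older split with conjugate exponents $q/\beta'$ and $q/(q-\beta')$, and the limit $\alpha' \downarrow \alpha$ giving $\dim_{\scH}(g(A)) \le \beta$ are all correct, and they do match the general shape of the Hencl--Honz\'ik argument. The difficulty is concentrated entirely in your equation \eqref{eq:proofsketch:osc}, and as written that estimate is false. The Bessel potential is a nonlocal operator: if $h$ is supported in, say, an annulus $\{C r \le |w - x| \le 2Cr\}$, then the right-hand side $\Vnorm{h}_{L^q(B_{Cr}(x))}$ vanishes while $g = G_t \ast h$ is a nonconstant smooth function near $x$, so $\osc_{B_r(x)} g \neq 0$. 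Indeed, if one runs your near/far splitting and applies H\"older to the far field, the far-field oscillation is controlled by $r^{t-N/q}\Vnorm{h}_{L^q(\bbR^N)}$ (global norm) or, after a dyadic refinement, by $r^t\,\brac{M(|h|^q)(x)}^{1/q}$; neither of these is the local quantity you need, and neither can be summed over the cover by bounded overlap alone. This is the precise place where the Bessel-potential case genuinely departs from Kaufman's $W^{1,p}$, $p>n$, theorem, where Morrey's inequality really is local because $\nabla g$ is local.

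The actual mechanism in \cite{HH15} replaces \eqref{eq:proofsketch:osc} with a pointwise oscillation bound whose right-hand side is a truncated Riesz-type potential of $|h|^q$ centered at $x$, roughly $\osc_{B_r(x)} g \precsim \brac{\int_{B(x, Cr)} |h(z)|^q\, |x-z|^{tq - N}\, dz}^{1/q}$ plus an explicitly controlled far-field remainder; the weight $|x-z|^{tq-N}$ spreads the local norm across dyadic annuli so that, after Frostman-type selection of the cover and an interchange of summation (Fubini over scales), the far-field contributions re-sum to a bounded multiple of $\Vnorm{h}_{L^q}^q$. That re-summation is the real content of the theorem and is what your proposal omits. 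To repair the sketch you would need to (a) state the oscillation estimate with the correct nonlocal right-hand side, and (b) replace the single application of bounded overlap with a scale-by-scale Fubini argument that shows $\sum_i R_i^{\beta'}$ is still finite once the far-field weights are accounted for. Everything else in your write-up --- the choice of $\beta'$, the passage to the limit, and the remark that the naive global H\"older bound only gives $\alpha q/(tq-N)$ --- is accurate and correctly identifies why the sharper local estimate is indispensable.
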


We then have as a corollary

\begin{theorem}\label{thm:DimOfImage}
Let $n \geq 2$, $s \in (0,1)$, and $p > 1$ with $n-1 < sp < n$. Let $r>0$, $a \in \bbR^n$ with $\p B \equiv \p B(a,r)$ and $g \in W^{s,p}(\p B;\bbR^{n})$ be a continuous representative. Then $\dim_{\scH}(g(\p B)) \leq \frac{n-1}{s}$. 
\end{theorem}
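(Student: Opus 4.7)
The plan is to reduce the statement on the sphere to one on $\bbR^{n-1}$ via local charts, and then apply \Cref{prop:HenclDistortionThm} after trading a bit of regularity for integrability via \Cref{thm:TriebelEmbedding}. Since $\p B = \p B(a,r)$ is a smooth compact $(n-1)$-dimensional manifold, I would cover it by finitely many coordinate patches each bi-Lipschitz equivalent to a bounded open set $U \subset \bbR^{n-1}$. Bi-Lipschitz changes of variables preserve the Gagliardo seminorm up to constants, so on each patch the pullback of $g$ is a continuous function in $W^{s,p}(U;\bbR^n)$; multiplying by a smooth cutoff and using a standard Stein-type extension yields a function $\widetilde g \in W^{s,p}(\bbR^{n-1};\bbR^n)$ that is continuous on a neighborhood of $\overline U$ and whose image on $U$ coincides with $g$ evaluated on the corresponding piece of $\p B$.

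Next, I would fix any $t \in \bigl(s - \tfrac{n-1}{p},\, s\bigr)$; this is a nonempty subinterval of $(0,s)$ because $sp > n-1$. By \Cref{thm:TriebelEmbedding} with $N = n-1$, one has $\widetilde g \in H^{t,p_t}(\bbR^{n-1};\bbR^n)$ with $p_t = \frac{(n-1)p}{(n-1) - (s-t)p}$, and the identity \eqref{eq:ExpRelation1} applied with $N = n-1$ gives
\[
sp - (n-1) = \tfrac{p}{p_t}\bigl(tp_t - (n-1)\bigr),
\]
so $tp_t > n-1$. This is precisely the hypothesis required to invoke \Cref{prop:HenclDistortionThm}.

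I would then apply \Cref{prop:HenclDistortionThm} with parameters $N = n-1$, $K = n$, $\alpha = n-1$, $q = p_t$, and $A = U$ (which has Hausdorff dimension $n-1 = \alpha$). The resulting distortion exponent simplifies to
\[
\beta = \frac{(n-1)p_t}{tp_t - (n-1) + (n-1)} = \frac{n-1}{t}.
\]
Summing over the finitely many charts yields $\dim_{\scH}(g(\p B)) \leq \frac{n-1}{t}$. Since $t < s$ is otherwise arbitrary, letting $t \nearrow s$ delivers the desired bound $\dim_{\scH}(g(\p B)) \leq \frac{n-1}{s}$.

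The only real obstacle is the chart and extension bookkeeping in the first step: verifying that bi-Lipschitz maps preserve the Gagliardo $W^{s,p}$ seminorm on $(n-1)$-dimensional Lipschitz manifolds, and producing an extension $\widetilde g$ that is both globally $W^{s,p}$-regular on $\bbR^{n-1}$ and continuous enough to apply \Cref{prop:HenclDistortionThm}. Both facts are standard, but they require a few careful lines to assemble rigorously; everything after that is a direct computation with the Triebel--Lizorkin embedding and the Hencl--Honz\'ik distortion exponent.
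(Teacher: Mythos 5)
Your argument is essentially the paper's own proof: cover the sphere by finitely many charts, pass from $W^{s,p}$ to $H^{t,p_t}$ on $\bbR^{n-1}$ via \Cref{thm:TriebelEmbedding}, apply \Cref{prop:HenclDistortionThm} with $\alpha = n-1$ to get the distortion exponent $\beta = \tfrac{n-1}{t}$, and then send $t \nearrow s$. The only cosmetic difference is in the chart bookkeeping: the paper pulls $g$ back along diffeomorphisms $\psi_i : \bbR^{n-1} \to S_i$ onto hemispheres and asserts directly that $g_i = g \circ \psi_i \in W^{s,p}(\bbR^{n-1};\bbR^n)$, while you pull back to bounded patches and invoke a Stein-type extension before embedding; both are standard and lead to the same estimate.
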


\begin{proof}
It suffices to show that
\begin{equation*}
\scH^{\frac{n-1}{s}+\veps}(g(\p B)) = 0
\end{equation*}
for arbitrary $\veps > 0$ small. Cover $\p B$ by sets $S_i$ diffeomorphic to $\bbR^{n-1}$ ($2^n$ hemispheres will do), and let $\psi_i : \bbR^{n-1} \to S_i$ be the corresponding diffeomorphisms. So $\p B \subset \bigcup_{i=1}^M S_i $, and the functions
\begin{equation*}
g_i := g \circ \psi_i
\end{equation*}
belong to $W^{s,p}(\bbR^{n-1};\bbR^n)$, and hence belong to $H^{t,p_t}(\bbR^{n-1};\bbR^n)$ by Theorem \ref{thm:TriebelEmbedding} for any $t \in (s-\frac{n-1}{p},s)$ and for $p_t = \frac{(n-1)p}{(n-1)-(s-t)p}$.

Applying Proposition \ref{prop:HenclDistortionThm} to each $g_i$ with $q = p_t$ and $N = \alpha = n-1$ gives
\begin{equation*}
\scH^{\gamma}(g_i(\bbR^{n-1})) = 0\,, \quad \text{ for every } \gamma > \frac{n-1}{t}\,, \, i = \vbrack{1, \ldots, M}\,.
\end{equation*}
Choose $t < s$ close enough to $s$ so that $\frac{n-1}{t} < \frac{n-1}{s}+ \veps$. Then
\begin{equation*}
\scH^{\frac{n-1}{s}+\veps}(g(\p B)) \leq \sum_{i=1}^M \scH^{\frac{n-1}{s}+\veps}(g(S_i)) = \sum_{i=1}^M \scH^{\frac{n-1}{s}+\veps}(g_i(\bbR^{n-1})) = 0\,,
\end{equation*}
as desired.
\end{proof}

Throughout this note we additionally require control of fractional Sobolev functions on spheres in $\bbR^n$. In the local case, this control is obtained straightforwardly; for example, using Fubini's theorem for a smooth function $f$ on $\overline{B(a,r)}$ 
\begin{equation*}
\int_0^r \intdm{\p B(a,\rho)}{|\wt{\grad}f(\rho \omega)|^p }{\scH^{n-1}(w)} \, \rmd \rho \leq \intdm{B(a,r)}{|\grad f(x)|^p}{x}\,,
\end{equation*}
where $\wt{\grad}f$ denotes the tangential derivative of $f|_{\p B(a,\rho)}$. The following Besov-type inequality serves as a fractional analogue:

\begin{lemma}\label{lma:TangentDerivEstimate}
Let $B(a,r) \subset \bbR^n$, with $p \in [1,\infty)$ and $s \in (0,1)$. Then there exists a constant $C = C(n,s,p)$ such that for every $f \in W^{s,p}(B(a,r);\bbR^n)$
\begin{equation}\label{eq:TangentDerivEstimate}
\begin{split}
\int_{r/2}^r \iintdm{\p B(a,\rho)}{\p B(a,\rho)}{\frac{|f(x)-f(y)|^p}{|x-y|^{n-1+sp}}}{\scH^{n-1}(y)}{\scH^{n-1}(x)} \, \rmd \rho \leq C [f]_{W^{s,p}(B(a,r))}^p\,.
\end{split}
\end{equation}
\end{lemma}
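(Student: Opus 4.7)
The plan is a triangle-inequality plus averaging argument combined with a Fubini swap. I reduce WLOG to $a=0$ (by translation) and $f \in C^\infty$ (by density, which suffices since the estimate involves only the Gagliardo seminorms).

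For each $\rho \in (r/2, r)$ and each $x_0, y_0 \in \p B(0, \rho)$, set $\delta := |x_0 - y_0|$ and, for a fixed small $c > 0$, put $B_{x_0} := B(x_0, c\delta) \cap B(0, r)$ and $B_{y_0}$ analogously. Since $x_0, y_0 \in \overline{B(0,r)}$ these sets have measure $\sim \delta^n$, and $|x'-y'| \sim \delta$ for $x' \in B_{x_0}, y' \in B_{y_0}$. The $p$-power triangle inequality along the chain $x_0 \to x' \to y' \to y_0$ combined with Jensen averaging in $(x', y')$ gives
$$|f(x_0) - f(y_0)|^p \precsim \fint_{B_{x_0}} |f(x_0) - f(x')|^p \rmd x' + \fint_{B_{x_0}} \fint_{B_{y_0}} |f(x') - f(y')|^p \rmd x' \rmd y' + \fint_{B_{y_0}} |f(y_0) - f(y')|^p \rmd y'.$$
Multiplying by $|x_0 - y_0|^{-(n-1+sp)}$ and integrating over $(x_0, y_0, \rho) \in \p B_\rho \times \p B_\rho \times (r/2, r)$ decomposes the left-hand side of the claimed inequality into three pieces that I bound separately.

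For the middle piece, Fubini fixes $(x', y') \in B(0,r)^2$ and reduces matters to bounding the measure of admissible $(x_0, y_0, \rho)$. Invoking the polar-coordinate identity $\int_{r/2}^r \rmd\rho \int_{\p B_\rho} g\, \rmd\scH^{n-1} = \int_A g\, \rmd x$ on the annulus $A := \{r/2 < |x| < r\}$, the $x_0$-slice contributes measure $\lesssim \delta^n$ (as $x_0$ ranges over $B(x', c\delta) \cap A$); for each such $x_0$, the $y_0$-slice on $\p B_{|x_0|} \cap B(y', c\delta)$ contributes $\lesssim \delta^{n-1}$; the total is $\lesssim \delta^{2n-1}$. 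Combined with the weight $\delta^{-(n-1+sp)}\delta^{-2n}$ and $\delta \sim |x'-y'|$, this reconstructs the Gagliardo kernel $|x'-y'|^{-(n+sp)}$, bounding the middle piece by $[f]_{W^{s,p}(B(0,r))}^p$.

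For the first piece (the third being symmetric), the integrand depends only on $(x_0, x', \rho)$. A Fubini swap integrates out $y_0 \in \p B_{|x_0|}$ under the constraint $|x_0 - y_0| \geq |x_0-x'|/c$ (forced by $x' \in B(x_0, c\delta)$); a direct angular computation on $S^{n-1}$ yields
$$\int_{\p B_{|x_0|}} \mathbf{1}_{|x_0-y_0| \geq |x_0-x'|/c}\, |x_0-y_0|^{-(2n-1+sp)} \rmd\scH^{n-1}(y_0) \precsim |x_0-x'|^{-(n+sp)}.$$
Using the polar identity once more to replace $\int_{r/2}^r \rmd\rho \int_{\p B_\rho} \rmd\scH^{n-1}(x_0)$ by $\int_A \rmd x_0$ then controls this piece by $\iint_{A \times B(0,r)} \frac{|f(x_0)-f(x')|^p}{|x_0-x'|^{n+sp}} \rmd x_0 \rmd x' \leq [f]_{W^{s,p}(B(0,r))}^p$. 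The principal difficulty will be the careful bookkeeping of the two Fubini exchanges, especially the sphere integral above; a minor secondary point is that $B(x_0, c\delta)$ may stick outside $B(0,r)$ when $x_0$ lies near $\p B(0, r)$, but since $x_0 \in \overline{B(0,r)}$ the intersection $B(x_0, c\delta) \cap B(0,r)$ still has measure $\gtrsim \delta^n$, so Jensen loses only a dimensional constant.
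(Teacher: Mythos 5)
Your proposal is correct, and it takes a genuinely different route from the paper's proof. The paper subtracts the mean so Poincar\'e absorbs the $L^p$-norm into the seminorm, passes to polar coordinates, and flattens a spherical cap to $\bbR^{n-1}$ via an explicit stereographic projection $\psi$; the key averaging step is borrowed from Adams, inserting $f(\sigma\psi(\frac{x+y}{2}))$ at the chart-midpoint on a \emph{different} sphere $\partial B_\sigma$ and averaging over $\sigma$ in an interval of length $\sim|\psi(x)-\psi(y)|$, after which several changes of variables (tracking $J_\psi$ and the ratio $G$) reassemble the solid Gagliardo kernel, and a cap decomposition $H_0 \cup (\bbS^{n-1}\setminus H_0)$ plus a cross-term handles the full sphere. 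You instead stay in the ambient $\bbR^n$, comparing $f(x_0)$, $f(y_0)$ to solid-ball averages over $B(x_0,c\delta)\cap B(0,r)$ and $B(y_0,c\delta)\cap B(0,r)$ with $\delta=|x_0-y_0|$, then pushing the weight through Fubini. This buys a chart-free argument with no Jacobian bookkeeping and no cap splitting, and the right-hand side is the pure seminorm $[f]_{W^{s,p}}^p$ rather than $\Vnorm{f}_{W^{s,p}}^p$, so the Poincar\'e reduction becomes unnecessary. The cost is the two measure estimates you correctly flag: the lower bound $|B(x_0,c\delta)\cap B(0,r)| \geq c_n\delta^n$, which holds for $c$ small since the centers $x_0$ lie on $\partial B_\rho$ with $\rho \geq r/2$ so that $B(0,r)$ satisfies a uniform interior cone condition at $x_0$; and the spherical tail estimate
\[
\int_{\partial B_\rho}\mathbf{1}_{\{|x_0-y_0|\geq\tau\}}\,|x_0-y_0|^{-(2n-1+sp)}\,\rmd\scH^{n-1}(y_0) \;\precsim\; \tau^{-(n+sp)}\,,
\]
which follows from the parametrization $|x_0-y_0|=2\rho\sin(\theta/2)$, $\rmd\scH^{n-1}\sim \rho^{n-1}\sin^{n-2}\theta\,\rmd\theta$, and the fact that $n-2-(2n-1+sp)<-1$, so the resulting one-dimensional integral is dominated by its lower endpoint. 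Both check out, and the Fubini swaps close the argument. In the end both proofs are instances of the same ``trace by averaging a comparison point'' device, but yours executes it intrinsically in $\bbR^n$ and is arguably the more elementary.
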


These types of estimates are well-known to experts (see for example \cite{BBM05}), but for the sake of completeness we have included the proof in the appendix, see \Cref{s:lemma2}. The following corollary to the lemma reveals finer properties of Sobolev functions:

\begin{corollary}\label{cor:PrecRepOnSphere}
Let $1 < sp < n$, let $x_0 \in \Omega \subset \bbR^n$, and suppose $f \in W^{s,p}(\Omega;\bbR^n)$. Then there exists a set $N_{x_0} \subset (0,\dist(x_0,\p \Omega))$ with $\scL^1(N_{x_0}) = 0$ such that for every $r \in (0,\dist(x_0,\p \Omega)) \setminus N_{x_0}$ the function $\fstar|_{\p B(x_0,r)}$ belongs to $W^{s,p}(\p B(x_0,r);\bbR^n)$, where $\fstar$ is the precise representative defined in \eqref{def:PreciseRep}. If in addition $sp > n-1$ then $\fstar|_{\p B(x_0,r)}$ is continuous.
In general the singular set depends on $x_0$.
\end{corollary}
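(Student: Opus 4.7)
The plan is to extract pointwise information on spheres from the averaged sphere estimate in \Cref{lma:TangentDerivEstimate}, combine it with the Lebesgue-point analysis of \Cref{prop:HMeasureOfNonLebPts}, and conclude continuity via a Sobolev-Morrey embedding on the $(n-1)$-dimensional sphere.

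First I would dispatch the $L^p$ bound on spheres: writing the ball in spherical coordinates centered at $x_0$, Fubini's theorem gives
\begin{equation*}
\int_0^{\dist(x_0,\p\Omega)} \intdm{\p B(x_0,\rho)}{|\fstar(x)|^p}{\scH^{n-1}(x)} \, \rmd\rho \leq \Vnorm{\fstar}_{L^p(\Omega)}^p < \infty,
\end{equation*}
so $\fstar \in L^p(\p B(x_0,\rho);\bbR^n)$ for $\scL^1$-a.e.\ $\rho$. Applying \Cref{lma:TangentDerivEstimate} to dyadic annuli covering $(0,\dist(x_0,\p\Omega))$ yields the analogous conclusion for the Gagliardo seminorm $[\fstar]_{W^{s,p}(\p B(x_0,\rho))}$, so the radii where either quantity is infinite form an $\scL^1$-null set $N_{x_0}^{(1)}$.

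Next I would check that $\fstar|_{\p B(x_0,\rho)}$ is unambiguously defined on a.e.\ sphere. \Cref{prop:HMeasureOfNonLebPts}\ref{item:LebPts:Measure} gives $\dim_{\scH}(A_{sp}) \leq n-sp < n-1$ (using the hypothesis $sp>1$), and in particular $\scH^{n-1}(A_{sp})=0$. Applying Eilenberg's coarea-type inequality to the $1$-Lipschitz map $x\mapsto|x-x_0|$ gives
\begin{equation*}
\int_0^\infty \scH^{n-2}(A_{sp}\cap\p B(x_0,\rho))\,\rmd\rho \precsim \scH^{n-1}(A_{sp}) = 0,
\end{equation*}
hence $\scH^{n-1}(A_{sp}\cap\p B(x_0,\rho))=0$ for every $\rho$ outside a further $\scL^1$-null set $N_{x_0}^{(2)}$. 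On such spheres $\fstar$ is given by honest Lebesgue limits $\scH^{n-1}$-a.e., so the restriction is an unambiguous element of $W^{s,p}(\p B(x_0,\rho);\bbR^n)$; setting $N_{x_0} := N_{x_0}^{(1)}\cup N_{x_0}^{(2)}$ proves the first assertion.

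When $sp>n-1$, the Sobolev-Morrey embedding on the $(n-1)$-dimensional sphere gives $W^{s,p}(\p B(x_0,\rho)) \hookrightarrow C^{0,s-(n-1)/p}(\p B(x_0,\rho))$, so $\fstar|_{\p B(x_0,\rho)}$ admits a H\"older-continuous representative $g$. The main technical obstacle is upgrading this to genuine continuity of $\fstar|_{\p B(x_0,\rho)}$ itself: on $A_{sp}\cap\p B(x_0,\rho)$ the function $\fstar$ equals $0$ by convention, a set that is $\scH^{n-1}$-null but need not be empty. The clean fix is to show, via a polar-coordinate Fubini argument, that at each point $x \in \p B(x_0,\rho)\setminus A_{sp}$ the $n$-dimensional Lebesgue value defining $\fstar(x)$ coincides with the $(n-1)$-dimensional sphere-average limit, which by continuity of $g$ equals $g(x)$; enlarging $N_{x_0}$ by the null set of radii where this identification breaks down then delivers the claim.
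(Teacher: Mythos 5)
Your argument for the $W^{s,p}$ membership of $\fstar|_{\p B(x_0,r)}$ is a valid alternative route: where the paper mollifies $\fstar$ and uses \Cref{lma:TangentDerivEstimate} to show that $f^\veps|_{\p B(x_0,\rho)} \to \fstar|_{\p B(x_0,\rho)}$ in $W^{s,p}(\p B(x_0,\rho))$ for a.e.\ $\rho$, you apply that lemma and Fubini directly to $\fstar$. The Eilenberg step in your middle paragraph is superfluous for this part—$\scH^{n-1}(A_{sp})=0$ already gives $\scH^{n-1}(A_{sp}\cap\p B(x_0,\rho))=0$ for \emph{every} $\rho$ by monotonicity of Hausdorff measure, and in any case $W^{s,p}(\p B(x_0,\rho))$ membership is insensitive to values on $\scH^{n-1}$-null subsets of the sphere—but it does no harm.

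Your continuity argument has a genuine gap that the proposed fix does not close. You correctly flag the danger: on $A_{sp}\cap\p B(x_0,r)$ the precise representative is forced to equal $0$, while the Morrey-continuous function $g$ need not vanish there. But the ``polar-coordinate Fubini'' repair you describe is claimed only for $x\in\p B(x_0,r)\setminus A_{sp}$, which merely reestablishes the $\scH^{n-1}$-a.e.\ agreement $\fstar=g$ that you already have; it says nothing about the points of $A_{sp}\cap\p B(x_0,r)$, and if that intersection is nonempty then $\fstar|_{\p B(x_0,r)}$ can genuinely fail to be continuous. The missing step is to rule those points out, and you already hold the right tool. Since $sp>n-1$, \Cref{prop:HMeasureOfNonLebPts}\ref{item:LebPts:Measure} gives $\dim_{\scH}(A_{sp})\leq n-sp<1$, so $\scH^1(A_{sp})=0$; the same Eilenberg inequality you invoked earlier, now with the sharper exponent, yields
\begin{equation*}
\int_0^\infty \scH^{0}\bigl(A_{sp}\cap\p B(x_0,\rho)\bigr)\,\rmd\rho \precsim \scH^{1}(A_{sp})=0,
\end{equation*}
hence $A_{sp}\cap\p B(x_0,\rho)=\emptyset$ for $\scL^1$-a.e.\ $\rho$. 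That emptiness is exactly what the paper's proof exploits: by \Cref{prop:HMeasureOfNonLebPts}\ref{item:LebPts:Mollifiers} the mollifications converge pointwise to $\fstar$ at every point off $A_{sp}$, and for a.e.\ $\rho$ they also converge \emph{uniformly} on $\p B(x_0,\rho)$ to the continuous $W^{s,p}$-limit by compact embedding, so once $A_{sp}\cap\p B(x_0,\rho)=\emptyset$ the two limits coincide at \emph{every} point of the sphere. Without this emptiness statement your argument stops at a.e.\ agreement, which is not continuity.
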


\begin{proof}
For $\veps > 0$ let $\varphi^{\veps}$ be the standard mollifier, and let $f^{\veps} := \varphi^{\veps} \ast \fstar$. Then $f^{\veps}$ converges to $\fstar$ in $W^{s,p}(B(x_0,r))$ for any $r \in (0,\dist(x_0,\p \Omega))$, and by \Cref{lma:TangentDerivEstimate}
\begin{equation*}
\int_{r/2}^{r} {[f^{\veps} - \fstar]_{W^{s,p}(\p B(x_0,\rho))}^p } \, \rmd \rho \leq C [f^{\veps} - \fstar]_{W^{s,p}(B(x_0,r))}^p \to 0 \text{ as } \veps \to 0\,.
\end{equation*}
Thus for $\scL^1$-almost every $r \in (0,\dist(x_0,\p \Omega))$ we have that the smooth functions $f^{\veps}|_{\p B(x_0,r)}$ converge to a function $g_r \in W^{s,p}(\p B(x_0,r))$.
On the other hand, Proposition \ref{prop:HMeasureOfNonLebPts} applies to $f$ since we can find a Sobolev extension domain $K$ satisfying $B(x_0,r) \subset K \subset \Omega$. Thus since $sp > 1$ we have from Proposition \ref{prop:HMeasureOfNonLebPts}\ref{item:LebPts:Mollifiers} that for every $r \in (0,\dist(x_0,\p \Omega))$
\begin{equation*}
f^{\veps}(x) \to \fstar(x) \text{ on } B(x_0,r) \setminus A_{sp}\,, \text{ where } \scH^{n-1}(A_{sp}) = 0\,.
\end{equation*}
Therefore for $\scL^1$-almost every $r \in (0,\dist(x_0,\p \Omega))$ the functions $f^{\veps}|_{\p B(x_0,r)}(x)$ converge to $\fstar(x)$ for $\scH^{n-1}$-almost every $x \in \p B(x_0,r)$. So for $\scL^1$-almost every $r \in (0,\dist(x_0,\p \Omega))$ the function $\fstar|_{\p B(x_0,r)}$ agrees with $g_r$ up to a set of $\scH^{n-1}$-measure zero, hence $\fstar|_{\p B(x_0,r)}$ belongs to $W^{s,p}(\p B(x_0,r))$.

Now if $sp > n-1$, then $f^{\veps} \to g_r$ locally uniformly on $\p B(x_0,r)$ by the Sobolev compact embedding theorem (see for example \cite[Theorem 2, pg. 82]{RS11},  \cite[Lemma 41.4]{T07}), and additionally
$
\scH^1(A_{sp}) = 0
$.
Therefore for $\scL^1$-almost every $r \in (0,\dist(x_0,\p \Omega))$ the sequence $f^{\veps}(x)$ converges to $\fstar(x)$
\textit{for every} $x \in \p B(x_0,r)$, and so $\fstar(x)$ agrees with the continuous function $g_r(x)$ \textit{for every} $x \in \p B(x_0,r)$.

\end{proof}

The following is an adaptation of \cite[Proposition 3.1]{LS19}, which in turn is an extension of an argument in \cite{S88}.
 
\begin{proposition}\label{pr:osciscontinuity}
Let $\Omega \subset \R^n$, $s \in (0,1)$ and $p \in (1,\infty)$ with $n-1 < sp < n$. Assume that $f \in W^{s,p}(\Omega;\R^n)$ satisfies the following: for any $x_0 \in \Omega$ there exists a set $N_{x_0}$ satisfying $\scL^1(N_{x_0})=0$ such that for all radii $r$, $\rho \in (0,\dist(x_0,\Omega)) \backslash N_{x_0}$ with $r < \rho$, there holds for some $\Lambda \geq 1$ independent of $r$, $\rho$ and $x_0$
\[
  \osc_{\partial B(x_0,r)} \fstar \leq \Lambda\, \osc_{\partial B(x_0,\rho)} \fstar\,,
\]
where $\fstar$ is the continuous representative of $f$ defined in \eqref{def:PreciseRep}.
Then there exists a singular set $\Sigma \subset \Omega$ with $\scH^{(n-sp)_+}(\Sigma) = 0$ 
such that $\fstar$ is continuous on $\Omega \backslash \Sigma$. 

\end{proposition}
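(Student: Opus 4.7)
The plan is to set $\Sigma := A_{sp} \cup E$, where $A_{sp}$ is the non-Lebesgue-point set of \Cref{prop:HMeasureOfNonLebPts} (so $\scH^{n-sp}(A_{sp})=0$) and
\[
E := \Bigl\{x_0 \in \Omega : \liminf_{R \to 0^+} R^{sp-n} [f]_{W^{s,p}(B(x_0,R))}^p > 0\Bigr\}.
\]
To show $\scH^{n-sp}(E)=0$, I would stratify $E = \bigcup_{k,m} E_{k,m}$ by the density threshold $1/k$ at all scales $R \le 1/m$, and for each $(k,m)$ apply a Vitali $5r$-covering by balls $B(x_i,r_i)$ with $x_i \in E_{k,m}$ and $r_i \le \eta \le 1/m$. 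The defining inequality $r_i^{n-sp} \le k\, [f]_{W^{s,p}(B(x_i,r_i))}^p$ together with superadditivity of the Gagliardo seminorm on disjoint balls yields $\sum_i r_i^{n-sp} \le k\, [f]_{W^{s,p}(\bigcup_i B(x_i,r_i))}^p$. Since $\bigl|\bigcup_i B(x_i,r_i)\bigr| \lesssim \eta^{sp} \sum_i r_i^{n-sp}$, absolute continuity of the Gagliardo integrand forces the right-hand side to vanish as $\eta \to 0$, giving $\scH^{n-sp}(E_{k,m})=0$.

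Second, I would prove oscillation decay: for $x_0 \in \Omega \setminus \Sigma$, the quantity $\omega(r) := \osc_{\partial B(x_0,r)} \fstar$ tends to $0$ as $r \to 0$ along good radii $r \in G_{x_0} := (0,\dist(x_0,\partial\Omega)) \setminus N_{x_0}$. The Sobolev--Morrey embedding on $(n-1)$-spheres (available since $sp > n-1$) gives $\omega(\rho)^p \le C \rho^{sp-(n-1)} [\fstar]_{W^{s,p}(\partial B(x_0,\rho))}^p$, and integrating in $\rho \in (R/2,R)$ via \Cref{lma:TangentDerivEstimate} yields
\[
\int_{R/2}^R \omega(\rho)^p \, d\rho \le C R^{sp-(n-1)} [f]_{W^{s,p}(B(x_0,R))}^p.
\]
Since $x_0 \notin E$, choose $R_k \to 0$ with $R_k^{sp-n} [f]_{W^{s,p}(B(x_0,R_k))}^p \to 0$; Chebyshev then supplies $\rho_k \in (R_k/2,R_k) \cap G_{x_0}$ with $\omega(\rho_k) \to 0$. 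The standing monotonicity assumption propagates this to $\omega(r) \le \Lambda\, \omega(\rho_k)$ for every $r \in G_{x_0}$ with $r < \rho_k$, so $\omega(r) \to 0$.

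Third, to upgrade decay to pointwise continuity I would invoke the Lebesgue-point property $\fint_{B(x_0,R)} |f - \fstar(x_0)|^p \, dy \to 0$ (from $x_0 \notin A_{sp}$). Fubini and Chebyshev then produce, for every sufficiently small $R$, a set of ``doubly good'' radii $\rho \in (R/2,R) \cap G_{x_0}$ of positive measure on which simultaneously $\omega(\rho) < \eta$ and $\fint_{\partial B(x_0,\rho)} |\fstar - \fstar(x_0)|^p \, d\scH^{n-1} < \eta^p$; choosing an anchor $y \in \partial B(x_0,\rho)$ with $|\fstar(y) - \fstar(x_0)| < C\eta$ gives $\sup_{\partial B(x_0,\rho)} |\fstar - \fstar(x_0)| \le \omega(\rho) + C\eta$. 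Continuity at $x_0$ then follows from a two-sphere argument: for $x \in \Omega \setminus \Sigma$ with $d = |x - x_0|$ small, select doubly-good radii $\rho$ for $x_0$ and $r$ for $x$, both lying in $(d,2d)$ (where they exist with positive density), so that $\partial B(x_0,\rho) \cap \partial B(x,r) \ne \emptyset$; for $z$ in the intersection,
\[
|\fstar(x) - \fstar(x_0)| \le \sup_{\partial B(x,r)} |\fstar - \fstar(x)| + \sup_{\partial B(x_0,\rho)} |\fstar - \fstar(x_0)| \to 0 \quad \text{as } d \to 0.
\]

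The main obstacle will be the Hausdorff-measure bound $\scH^{n-sp}(E) = 0$: the Vitali argument hinges on superadditivity of the Gagliardo seminorm on disjoint balls, and passing to $\eta \to 0$ requires absolute continuity of the Gagliardo integrand. Once $\scH^{n-sp}(\Sigma) = 0$ is in hand, the oscillation decay and two-sphere continuity argument follow the fractional analogue of the $W^{1,p}$ blueprint in \cite{S88,LS19}, with \Cref{lma:TangentDerivEstimate} and the Morrey embedding on $(n-1)$-spheres playing the central role made possible by $sp > n-1$.
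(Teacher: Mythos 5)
Your construction of the singular set matches the paper's in spirit: the paper sets $\Sigma = A_{sp} \cup X$ with $X$ defined via a $\limsup$ (see \eqref{eq:FrostLemmaSet}) and invokes Frostman's Lemma from \cite{Z89}; your Vitali covering argument with the $\liminf$-variant is a self-contained re-proof of the same measure-theoretic fact, and since $\liminf = 0$ is all that is actually used downstream, using the smaller set $E$ is harmless. The oscillation decay step and the use of \Cref{lma:TangentDerivEstimate} with Morrey--Sobolev embedding on spheres also match the paper.

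There is, however, a genuine gap in the final continuity step. Your two-sphere argument requires, for $x$ close to $x_0$, a radius $r \sim d = |x-x_0|$ with $\sup_{\partial B(x,r)}|\fstar - \fstar(x)| < \eps$. What you have established is only pointwise decay at each fixed $x \in \Omega\setminus\Sigma$: the scale $R_0(x,\eps)$ below which both $\omega_x(r) < \eps$ and the Lebesgue-point average are small depends on $x$, and there is no reason for $\inf_{x \in B(x_0,\delta)} R_0(x,\eps)$ to stay bounded away from $0$. As $x \to x_0$, you are forced to use ever smaller radii $r \sim d$, and nothing in the argument guarantees that $d < R_0(x,\eps)$. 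So the asserted convergence $\sup_{\partial B(x,r)}|\fstar - \fstar(x)| \to 0$ as $d \to 0$ does not follow.

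The paper resolves exactly this issue by observing that the Gagliardo density at $x_0$ controls the density at \emph{all} nearby centers at a comparable scale: if $R^{sp-n}[\fstar]_{W^{s,p}(B(x_0,R))}^p < \eps$ then $(R/2)^{sp-n}[\fstar]_{W^{s,p}(B(y_0,R/2))}^p \leq C\eps$ for every $y_0 \in B(x_0,R/2)$, simply because $B(y_0,R/2) \subset B(x_0,R)$. Combined with \eqref{eq:osc:1} this gives the uniform bound
\[
  \sup_{y_0 \in B(x_0,R/2)}\ \sup_{r \in (0,R/4)\setminus N_{y_0}} \osc_{\partial B(y_0,r)} \fstar < C\eps\,,
\]
which is what makes the two-sphere step quantitative. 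The paper also sidesteps any Lebesgue-point control at $x$ by using spheres that \emph{pass through} $x$ and $y$ (centered at points on the segments $[x_0,x]$ and $[x_0,y]$, all of which lie in $B(x_0,R/2)$), rather than spheres centered at $x$ and $y$, so only the oscillation bound on those spheres is needed. To repair your argument you should insert the nearby-center comparison estimate above and replace the spheres $\partial B(x,r)$, $\partial B(x_0,\rho)$ by the paper's spheres through $x$ and $x_0$ centered on the segment $[x_0,x]$.
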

\begin{proof}
Without loss of generality assume $sp < n$. The case $n=sp$ can be found in \cite[Proposition 3.1.]{LS19}, and $n < sp$ is obvious by Morrey-Sobolev embedding; see \cite{DNPV12}.

By Corollary \ref{cor:PrecRepOnSphere} for any $R > 0$ with $B(x_0,R) \subset \Omega$ and $\scL^1$-almost any $r < \rho < R$, the function $\fstar|_{\p B(x_0,\rho)}$ belongs to $W^{s,p}(B(x_0,\rho))$.
As in \cite[Proposition 3.1]{LS19}, by Morrey-Sobolev embedding
\[
 \brac{\osc_{\partial B(x_0,r)} \fstar}^p \leq \Lambda \brac{\osc_{\partial B(x_0,\rho)} \fstar}^p \leq C\rho^{sp-(n-1)} \int_{\partial B(x_0,\rho)}\int_{\partial B(x_0,\rho)} \frac{|\fstar(x)-\fstar(y)|^p}{|x-y|^{(n-1)+sp}}\, \rmd x\, \rmd y\,.
\]
Multiplying by $\rho^{-sp+(n-1)}$ and integrating in $\rho$ we obtain using \Cref{lma:TangentDerivEstimate}
\[
 c(s,p) \brac{R^{n-sp}-r^{n-sp}} \brac{\osc_{\partial B(x_0,r)} \fstar}^p \leq [\fstar]_{W^{s,p}(B(x_0,R))}^p\,.
\]
In particular we have
\begin{equation}\label{eq:osc:1}
 \brac{\osc_{\partial B(x_0,r)} \fstar}^p \leq R^{sp-n} [\fstar]_{W^{s,p}(B(x_0,R))}^p
\end{equation}
for any $R \in (0,\dist(x_0,\p \Omega))$ and for every $r \in( 0, R/2) \setminus N_{x_0}$.
Let 
\begin{equation}\label{eq:FrostLemmaSet}
 X := \left \{x \in \Omega: \limsup_{R \to 0^+} R^{sp-n} [\fstar]_{W^{s,p}(B(x,R))}^p > 0\right \}\,.
\end{equation}
By Frostman's Lemma (see \cite[Corollary 3.2.3]{Z89}) we have that $\scH^{(n-sp)_+}(X) = 0$.

Define $\Sigma = A_{sp} \cup X$, where $A_{sp}$ is defined in Proposition \ref{prop:HMeasureOfNonLebPts}. Let $x_0 \in \Omega \backslash \Sigma$ and let $\eps > 0$. Observe that if for some $R > 0$
\[
 R^{sp-n} [\fstar]_{W^{s,p}(B(x_0,R))}  < \eps
\]
then for all $y_0 \in B(x_0,R/2)$,
\[
 (R/2)^{sp-n} [\fstar]_{W^{s,p}(B(y_0,R/2))}  < C_{s,p,n} \eps.
\]
That is, from \eqref{eq:osc:1} and the definition of $X$ there must be some $R = R(x_0,\eps) > 0$ such that
\begin{equation}\label{eq:PreContinuity}
 \sup_{r \in (0,R/2) \setminus N_{x_0}} \osc_{\partial B(y_0,r)} \fstar < \eps \quad \forall y_0 \in B(x_0,R/2)\,.
\end{equation}
This implies 
\begin{equation}\label{eq:Continuity}
 \osc_{B(x_0,R/4)} \fstar < 2 \eps\,,
\end{equation}
which is continuity. To see \eqref{eq:Continuity}, without loss of generality let $x_0 = 0$. Let $x$ and $y$ be \textit{any} two points in $B(0,R/4) \setminus \{ 0 \}$. Then there exist $r \in (0,|x|)$ and $t \in (0,|y|)$ such that 
\begin{equation*}
\osc_{\p B(r \frac{x}{|x|},|x|-r)} \fstar < \veps\,, \quad \osc_{\p B(t \frac{y}{|y|},|y|-t)} \fstar < \veps\,, \quad \text{ and } \p B(r \frac{x}{|x|},|x|-r) \cap \p B(t \frac{y}{|y|},|y|-t) \neq \emptyset\,.
\end{equation*}
If this is not the case, then by \eqref{eq:PreContinuity} and since the maps $r \mapsto \p B(r x/|x|,|x|-r)$ and $t \mapsto \p B(t y/|y|,|y|-t)$  are continuous it follows that some open interval must reside within the set $N_{x_0}$, a contradiction. Now let $z \in B(0,R/4)$ be a point in the intersection; we have
\begin{equation*}
\begin{split}
|\fstar(x) - \fstar(y)| &\leq |\fstar(x) - \fstar(z)| + |\fstar(z) - \fstar(y)|  \\
	&\leq \osc_{\p B(r \frac{x}{|x|},|x|-r)} \fstar + \osc_{\p B(t \frac{y}{|y|},|y|-t)} \fstar < 2 \veps\,.
\end{split}
\end{equation*}
This holds for \textit{any} $x$ and $y$ not equal to zero. If one of the two points is the center of $B(0,R/4)$ (without loss of generality $y = 0$) then repeat the argument with the set $\p B(t \frac{x}{|x|},t)$ for $t \in (0,|x|)$ in place of 
$\p B(t \frac{y}{|y|},|y|-t)$. Thus \eqref{eq:Continuity} is proved.

\end{proof}

\section{Degree and Monotonicity estimates}\label{s:degreemon}

Let $B=B(x_0,r) \subset \R^n$ and let $f: \partial B \to \R^n$ be continuous. For $p \not \in f(\partial B)$ define the \textit{degree}
\[
 \deg(f,\partial B,p) := \deg_{\S^{n-1}}(\psi)
\]
where 
\[
 \psi := \frac{f\brac{\frac{x-x_0}{r}}-p}{\abs{f\brac{\frac{x-x_0}{r}}-p}} : \S^{n-1} \to \S^{n-1}
\]
and $\deg_{\S^{n-1}}$ computes the homotopy group of $\psi$ in $\pi_{n-1}(\S^{n-1}) = \Z$.

The main topological ingredient is the following lemma (which is well-known). Items (i) and (iii) are essentially a rewritten version of \cite[Lemma 5.1]{BHM19}, and (ii) is a consequence of (i) motivated by \cite{S88,GHP17,LS19}. 
\begin{lemma}\label{lma:TopMonotonicity}
Let $\Omega \subset \R^n$ be an open set. 

Assume that $B_1 := B(x_1,r_1)$ and $B_2 := B(x_2,r_2) \subset\subset \Omega$ are two open balls and $f,f_k : \partial B_1 \cup \partial B_2 \to \R^n$ be continuous maps, $k \in \N$ such that $f_k$ uniformly converges to $f$ on $\partial B_1 \cup \partial B_2$.

If for any $k \in \N$, the map $f_k$ can be extended to a homeomorphism $F_k: \Omega \to \R^n$ then the following hold:

\begin{enumerate}[(i)]
	
 \item\label{item:DegPf:1} If $B_1 \subset B_2$ then \[
 		\begin{split}
        &f(\partial B_1) \cup \left \{p \in \bbR^n \setminus f(\p B_1) \, : \, \deg(f,\partial B_1,p) \neq 0 \right \} \\
        &\qquad \subset 
        f(\partial B_2) \cup \left \{p \in \bbR^n \setminus f(\p B_2) \, : \, \deg(f,\partial B_2,p) \neq 0 \right \}
        \end{split}
       \]
\item\label{item:DegPf:2} If $B_1 \subset B_2$ then we have monotonicity of oscillation,
\[
                                \osc_{\partial B_1} f \leq 8\, \osc_{\partial B_2} f
\]
and
\[
 \diam \left \{p \in \bbR^n \setminus f(\p B_1) \, : \, \deg(f,\partial B_1,p) \neq 0 \right \} \leq 8\, \osc_{\partial B_2} f.
\]
\item\label{item:DegPf:3} If $B_1 \cap B_2 = \emptyset$ then \[
        \left \{p \in \R^n \backslash f(\partial B_1):\, \deg(f,\partial B_1,p) \neq 0 \right \} \cap 
        \left \{p \in \R^n \backslash f(\partial B_2):\, \deg(f,\partial B_2,p) \neq 0 \right \} = \emptyset
       \]
\end{enumerate}
\end{lemma}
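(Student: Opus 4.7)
The plan is based on two standard inputs: (a) \emph{degree stability}: if continuous maps $g_k \to g$ uniformly on $\partial B$ and $p \notin g(\partial B)$, then $p \notin g_k(\partial B)$ and $\deg(g_k,\partial B,p) = \deg(g,\partial B,p)$ for all large $k$; and (b) \emph{homeomorphism degree}: since each $F_k$ is a homeomorphism, invariance of domain and Jordan--Brouwer give that $F_k(\partial B)$ separates $\R^n$ into the bounded component $F_k(B)$ and the unbounded complement, with $\deg(F_k,\partial B,\cdot) = \pm 1$ on $F_k(B)$ and $0$ off $\overline{F_k(B)}$; equivalently, $\{p \notin F_k(\partial B) : \deg(F_k,\partial B,p) \neq 0\} = F_k(B)$. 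I would prove the three parts in the order (iii), (i), (ii).

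For (iii), if $p$ lies in the claimed intersection, then $p$ misses both images $f(\partial B_i)$ and both degrees are nonzero, so by (a) and (b), $p \in F_k(B_1) \cap F_k(B_2)$ for all large $k$. Injectivity of $F_k$ together with $B_1 \cap B_2 = \emptyset$ forces $F_k(B_1) \cap F_k(B_2) = \emptyset$, a contradiction. For (i), I would split the left-hand set into two cases. In the \emph{degree case} $p \notin f(\partial B_1)$ with $\deg(f,\partial B_1,p) \neq 0$, stability gives $p \in F_k(B_1) \subset F_k(B_2)$ (using $B_1 \subset B_2$ and injectivity of $F_k$); if $p$ avoided the right-hand set, stability applied at $\partial B_2$ would give $p \notin \overline{F_k(B_2)}$, a contradiction. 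In the \emph{boundary case} $p = f(x)$ for some $x \in \partial B_1$, I would assume toward contradiction that $p \notin f(\partial B_2)$ and $\deg(f,\partial B_2,p) = 0$. Since $p = f(x) \notin f(\partial B_2)$ and $x \in \overline{B_1} \subset \overline{B_2}$, we must have $x \in B_2$. Setting $\delta := \dist(p,f(\partial B_2)) > 0$, uniform convergence yields $\dist(p, F_k(\partial B_2)) \geq \delta/2$ for large $k$, and (a),(b) place $p$ in the unbounded component of $\R^n \setminus F_k(\partial B_2)$; but $F_k(x) \to p$ with $F_k(x) \in F_k(B_2)$ (the bounded component), and the segment from $F_k(x)$ to $p$ must cross $F_k(\partial B_2)$, yielding $|F_k(x) - p| \geq \delta/2$, a contradiction.

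For (ii), I would combine (i) with the elementary fact $\diam U = \diam \partial U$ for every bounded open $U \subset \R^n$ (extend any chord in $\overline U$ to the boundary along a line), applied to $U = F_k(B_i)$ whose boundary is $F_k(\partial B_i)$. Since $F_k(B_1) \subset F_k(B_2)$, this gives $\diam F_k(\partial B_1) \leq \diam F_k(\partial B_2)$, and passing to the uniform limit produces $\osc_{\partial B_1} f \leq \osc_{\partial B_2} f$, sharper than the stated constant $8$. For the diameter estimate on $\{p : \deg(f,\partial B_1,p) \neq 0\}$, part (i) embeds this set in $f(\partial B_2) \cup \{q : \deg(f,\partial B_2,q) \neq 0\}$; every point of the latter subset lies within $\diam F_k(\partial B_2)$ of $F_k(\partial B_2)$ and hence, in the limit, within $\osc_{\partial B_2} f$ of $f(\partial B_2)$. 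A triangle inequality then bounds the diameter by $3\,\osc_{\partial B_2} f \leq 8\,\osc_{\partial B_2} f$.

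The step I expect to be the main obstacle is the boundary case of (i): degree stability is unavailable at $p$ since $p \in f(\partial B_1)$, and one must instead combine the Jordan--Brouwer separation by the topological sphere $F_k(\partial B_2)$ with the pointwise convergence $F_k(x) \to p$ for a distinguished $x \in B_2$ to extract a quantitative separation contradiction. Once this topological step is in place, the remainder of the lemma follows from routine degree stability, injectivity of $F_k$, and the diameter equality for bounded open sets.
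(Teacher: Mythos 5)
Your proof is correct, and part (ii) takes a genuinely different route from the paper's. For (iii) your argument is the same as the paper's. For (i) you split explicitly into a degree case and a boundary case; the paper uses a single contradiction argument (assume $\deg(f,\partial B_2,p)=0$, deduce both that $p\in f(\partial B_1)$ and that $\dist(p,f(\partial B_1))\geq \liminf_k \dist(p,F_k(\overline{B_2}))>0$), but these are essentially the same two cases run in sequence. A small simplification to your boundary case: you do not need the segment to cross $F_k(\partial B_2)$ — since $x\in B_2$ gives $F_k(x)\in F_k(\overline{B_2})$ directly, while $p\notin F_k(\overline{B_2})$ with $\dist(p,F_k(\overline{B_2}))=\dist(p,F_k(\partial B_2))\geq\delta/2$, so $|F_k(x)-p|\geq\delta/2$ immediately contradicts $F_k(x)=f_k(x)\to f(x)=p$.

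The real divergence is (ii). The paper sets $D=\diam f(\partial B_2)$, builds a Lipschitz retraction $\pi:\R^n\to B(f(x_0),4D)$ fixing $B(f(x_0),3D)\supset f(\partial B_2)$, uses that degree depends only on boundary values to conclude the nonzero-degree set for $\partial B_2$ lies in $B(f(x_0),4D)$, and then invokes (i), giving constant $8$. You replace the retraction entirely with the elementary identity $\diam U=\diam\partial U$ for bounded open $U\subset\R^n$, applied to $U=F_k(B_i)$ (whose boundary is $F_k(\partial B_i)$ by invariance of domain). Monotonicity $F_k(B_1)\subset F_k(B_2)$ then gives $\osc_{\partial B_1}f_k\leq\osc_{\partial B_2}f_k$ directly and, after passing to the uniform limit, the sharper constant $1$; the diameter estimate follows with constant $3$ via the observation that every point of the $\partial B_2$ degree set lies within $\osc_{\partial B_2}f$ of $f(\partial B_2)$. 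This is cleaner, avoids constructing $\pi$, and strengthens the lemma's quantitative conclusion while still implying the stated bound.
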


\begin{proof}

To prove \ref{item:DegPf:1}, assume $B_1 \subset B_2$ and let \[
    p \in f(\partial B_1) \cup \left \{p \in \bbR^n \setminus f(\p B_1) :\, \deg(f,\partial B_1,p) \neq 0 \right \}.
 \]
If $p \in f(\partial B_2)$ there is nothing to show, so we may assume that $p \not \in f(\partial B_2)$. By uniform convergence $p \not \in f_k(\partial B_2)$ for all large $k$.
 
We use a contradiction argument; assume that $\deg(f,\partial B_2,p) = 0$. By the uniform convergence and since $p \not \in f_k(\partial B_2)$ we have that $\deg(f_k,\partial B_2,p) = 0$ for large $k$.

Let $F_k: \Omega \to \R^n$ be a homeomorphism such that $f_k = F_k \Big |_{\partial B_2}$. Then $\deg(f_k,\partial B_2,p) = 0$ implies that $p \not \in F_k(\overline{B_2})$.
Since $\overline{B_1} \subset B_2$ this implies that $p \not \in F_k(\overline{B_1})$ and thus 
 \[
  \deg(f_k,\partial B_1,p) = 0 \quad \text{ for large } k\,.
 \]
This leads to a contradiction as $k \to \infty$ unless $p \in f(\partial B_1)$.
However since $F_k: \overline{B_2} \to \R^n$ is a homeomorphism, it is an open map so if $p \in (\partial B_1) \backslash F_k(\overline{B_2})$ there must be $q_k \in \partial B_2$ such that 
\[
 \dist(p,F_k(\overline{B_2})) = |p-f_k(q_k)|.
\]
Since $p \not \in f(\partial B_2)$, we conclude via uniform convergence that
\[
 \liminf_{k \to \infty} \dist(p,F_k(\overline{B_2})) >0
 \]
and thus
\[
 \dist(p,f(\partial B_1)) = \liminf_{k \to \infty} \dist(p,f_k(\partial B_1)) \geq \liminf_{k \to \infty} \dist(p,F_k(\overline{B_2})) > 0,
\]
consequently $p \not \in f(\partial B_1)$.
 
%
%
To prove \ref{item:DegPf:2}, we have that 
\[
 f(\partial B_1) \subset f(\partial B_2) \cup  \left \{p \in \bbR^n \setminus f(\p B_2) \,:\, \deg(f,\partial B_2,p) \neq 0 \right \}.
\]
Let $D := \diam(f(\partial B_2))$ and pick any $x_0 \in \partial B_2$. then $f(\partial B_2) \subset B(f(x_0),3D)$. Moreover, let $\pi: \R^n \to B(f(x_0),4D)$ be Lipschitz such that $\pi\Big |_{B(f(x_0),3D)} = id$. Since the degree depends only on the boundary values, for any $p \not \in f(\partial _2)$,
\[
 \deg(f,\partial B_2,p) = \deg(\pi \circ f,\partial B_2,p).
\]
Since a necessary condition for the degree to be nonzero in a point $p$ is that $p$ belongs to the image, we conclude that 
\[
 \left \{p \in \bbR^n \setminus f(\p B_2) \,:\, \deg(f,\partial B_2,p) \neq 0 \right \} \subset B(f(x_0),4D).
\]
In conclusion, we have shown
\[
 f(\partial B_1) \subset B(f(x_0),4D)
\]
and thus
\[
 \diam( f(\partial B_1) ) \leq 8D = 8 \diam(f(\partial B_2)).
\]

For \ref{item:DegPf:3}, assume that $p \in \R^n \backslash \brac{f(\partial B_1) \cup f(\partial B_2)}$ and
\[
 \deg(f,\partial B_1,p) \neq 0, \quad \deg(f,\partial B_2,p) \neq 0.
\]
By uniform convegence, $p \in \R^n \backslash \brac{f_k(\partial B_1) \cup f_k(\partial B_2)}$ for eventually all $k \in \N$, and 
\[
 \deg(f_k,\partial B_1,p) \neq 0, \quad \deg(f_k,\partial B_2,p) \neq 0.
\]
This means that $p \in F_k(B_1) \cap F_k(B_2)$ which is a contradiction to $F_k$ being a homeomorphism.

\end{proof}

\section{Corollaries for Limits of Homeomorphisms}\label{s:coforlemma31}

We need the following result, which is a fractional analogue of \cite[Lemma 2.9]{MS95}:

\begin{lemma}\label{lma:ConvergenceOnSpheres}
Let $n \geq 2$, and let $p \in (1,\infty)$ and $s \in (0,1)$. Suppose that $\Omega \subset \bbR^n$ is a bounded domain, and let 
\begin{equation}
f_k \rightharpoonup f \text{ in } W^{s,p}(\Omega;\bbR^n)\,.
\end{equation}
Let $x_0 \in \Omega$, and define $r_{x_0} := \dist(x_0,\p \Omega)$. Then there exists a set $N_{x_0} \subset \bbR$ with $\scL^1(N_{x_0}) = 0$ such that for any $r \in (0,r_{x_0})\setminus N_{x_0}$ there exists a subsequence $f_k$ such that
\begin{equation}\label{eq:ConvergenceOnSpheres}
\fstar_k \rightharpoonup \fstar \text{ in } W^{s,p}(\p B(x_0,r);\bbR^n)\,.
\end{equation}
If $sp > n-1$ then
\begin{equation}\label{eq:UnifConvOnSpheres}
\fstar_k \rightrightarrows \fstar \text{ on } \p B(x_0,r)\,.
\end{equation}
In general the subsequence depends on $r$.
\end{lemma}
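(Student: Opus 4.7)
The plan is to combine the Besov-type radial integrability of Lemma \ref{lma:TangentDerivEstimate} with a Fatou/Fubini extraction of good radii, followed by reflexivity and Rellich compactness of $W^{s,p}$ on the sphere, finishing with identification of the weak limit as $\fstar|_{\p B(x_0,r)}$ via strong $L^p$ matching. First I would use weak convergence $f_k \rightharpoonup f$ in $W^{s,p}(\Omega;\bbR^n)$ to obtain a uniform bound on $\|f_k\|_{W^{s,p}(\Omega)}$, and by Rellich compactness pass to a subsequence (not relabeled) with $f_k \to f$ strongly in $L^p_{loc}(\Omega;\bbR^n)$ and pointwise $\cL^n$-a.e.\ in $\Omega$. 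Applying Lemma \ref{lma:TangentDerivEstimate} to $f_k$ on balls $B(x_0, R) \subset \Omega$, together with the coarea formula
\begin{equation*}
\int_0^{R} \|f_k\|_{L^p(\p B(x_0,\rho))}^p \, \rmd \rho = \|f_k\|_{L^p(B(x_0,R))}^p \,,
\end{equation*}
gives that $\rho \mapsto \|f_k\|_{W^{s,p}(\p B(x_0,\rho))}^p$ is $\scL^1$-integrable on $(R/2, R)$ uniformly in $k$; covering $(0, r_{x_0})$ by countably many such intervals extends this uniform integrability to all of $(0, r_{x_0})$.

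Next, Fatou's lemma applied to this uniform integrability yields $\liminf_k \|f_k\|_{W^{s,p}(\p B(x_0,\rho))}^p < \infty$ for $\scL^1$-a.e.\ $\rho \in (0, r_{x_0})$. Applying the same coarea identity to $\|f_k - f\|_{L^p(B(x_0,r_{x_0}))}^p \to 0$ and extracting one further subsequence yields $\|f_k - f\|_{L^p(\p B(x_0,\rho))} \to 0$ for $\scL^1$-a.e.\ $\rho$. Let $N_{x_0}$ be the union of the $\scL^1$-null sets on which any of these properties fails, together with the null set from Corollary \ref{cor:PrecRepOnSphere} off of which $\fstar|_{\p B(x_0,\rho)} \in W^{s,p}(\p B(x_0,\rho);\bbR^n)$ is well-defined.

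Fix $r \in (0, r_{x_0}) \setminus N_{x_0}$. By the liminf property I would extract a subsequence $\{f_{k_j}\}$ (depending on $r$) with $\sup_j \|\fstar_{k_j}\|_{W^{s,p}(\p B(x_0,r))} < \infty$, and by reflexivity of $W^{s,p}(\p B(x_0,r);\bbR^n)$ pass to a further subsequence so that $\fstar_{k_j} \rightharpoonup g$ in $W^{s,p}(\p B(x_0,r);\bbR^n)$ for some $g$. The Rellich compact embedding $W^{s,p}(\p B(x_0,r)) \hookrightarrow L^p(\p B(x_0,r))$ then upgrades this to $\fstar_{k_j} \to g$ strongly in $L^p(\p B(x_0,r);\bbR^n)$, which together with the selected strong convergence $\fstar_{k_j} \to \fstar$ in $L^p(\p B(x_0,r);\bbR^n)$ forces $g = \fstar|_{\p B(x_0,r)}$ $\scH^{n-1}$-a.e., establishing \eqref{eq:ConvergenceOnSpheres}. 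When $sp > n-1$, the fractional Morrey/Rellich embedding $W^{s,p}(\p B(x_0,r)) \hookrightarrow\hookrightarrow C^{0}(\p B(x_0,r))$ on the $(n-1)$-dimensional sphere combines with the weak $W^{s,p}$ convergence to yield uniform convergence, i.e.\ \eqref{eq:UnifConvOnSpheres}.

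The main technical obstacle is the proper identification of the weak limit on a sphere with the restriction of the precise representative $\fstar$: the weak compactness argument naturally yields some limit $g \in W^{s,p}(\p B(x_0,r);\bbR^n)$, but ensuring $g = \fstar|_{\p B(x_0,r)}$ requires simultaneously that $\fstar|_{\p B(x_0,r)}$ is a \emph{bona fide} element of $W^{s,p}(\p B(x_0,r);\bbR^n)$ (furnished by Corollary \ref{cor:PrecRepOnSphere}) and that the strong $L^p$ convergence on the sphere (obtained by Fubini after a separate subsequence extraction) is compatible with the weak Sobolev extraction; this is precisely why the subsequence ends up depending on $r$.
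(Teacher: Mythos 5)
Your proposal follows essentially the same route as the paper's proof: weak convergence gives uniform $W^{s,p}(\Omega)$ bounds, compact embedding plus Fubini gives strong $L^p(\p B(x_0,r))$ convergence of $\fstar_k \to \fstar$ for a.e.\ $r$ along a subsequence, Lemma~\ref{lma:TangentDerivEstimate} plus Fatou controls $\liminf_k$ of the spherical Gagliardo seminorm for a.e.\ $r$, and for each good $r$ one extracts an $r$-dependent subsequence on which the spherical seminorm is bounded, so that reflexivity plus the already-identified strong $L^p$ limit yields \eqref{eq:ConvergenceOnSpheres}, with \eqref{eq:UnifConvOnSpheres} from the compact $W^{s,p}(\p B) \hookrightarrow C^0(\p B)$ embedding when $sp>n-1$. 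The paper packages the Fatou step via $\Phi(r) := \liminf_k \Phi_k(r)$ and chooses the subsequence realizing the $\liminf$, whereas you split the extraction into two stages and explicitly invoke Corollary~\ref{cor:PrecRepOnSphere} to justify that $\fstar|_{\p B(x_0,r)}$ is the correct limit; these are minor presentational differences and the argument is correct.
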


\begin{proof}
First, by compact embedding there is a subsequence $f_k \to f$ in $L^p(B(x_0,r_{x_0});\bbR^n)$ and so Fubini's theorem implies
\begin{equation}
\fstar_k \to \fstar \text{ in } L^p(\p B(x_0,r);\bbR^n)\,, \quad \text{ for every } r \in (0,r_{x_0}) \setminus N_1 \text{ with } \scL^1(N_1) = 0\,.
\end{equation}
Next, define
\begin{equation}
\Phi_k(r) := \iintdm{\p B(x_0,r)}{\p B(x_0,r)}{\frac{|\fstar_k(x)-\fstar_k(y)|^p}{|x-y|^{n-1+sp}}}{\scH^{n-1}(y)}{\scH^{n-1}(x)}\,,
\end{equation}
with
\begin{equation}
\Phi(r) := \liminf_{k \to \infty} \Phi_k(r)\,.
\end{equation}
Then by Fatou's Lemma and by \Cref{lma:TangentDerivEstimate}
\begin{equation*}
\int_{r/2}^{r} \Phi(r) \, \rmd r \leq \liminf_{k \to \infty} \int_{r/2}^{r} \Phi_k(r) \, \rmd r \leq \liminf_{j \to \infty} [\fstar_k]_{W^{s,p}(B(x_0,r))}^p < \infty
\end{equation*}
for every $r \in (0,r_{x_0})$. Define $N_2 := \Vbrack{r \in (0,r_{x_0}) \, : \, \Phi(r) = \infty }$, and define $N_{x_0} := N_1 \cup N_2$; note $\scL^1(N_{x_0}) = 0$. Then let $r \in (0,r_{x_0}) \setminus N_{x_0}$, and choose a subsequence (not relabeled) satisfying
\begin{equation*}
\Phi(r) = \lim_{k \to \infty} \Phi_k(r)\,.
\end{equation*}
Then $\fstar_k \to \fstar$ strongly in $L^p(\p B(x_0,r);\bbR^n)$ and $\lim_{k \to \infty} [\fstar_k]_{W^{s,p}(\p B(x_0,r))} < \infty$, and so \eqref{eq:ConvergenceOnSpheres} follows.


In the event that $sp > n-1$ the uniform convergence follows from the compact Sobolev embedding theorem.
\end{proof}

The following is a corollary of the Sobolev compact embedding theorem, \Cref{lma:TopMonotonicity} and Proposition \ref{pr:osciscontinuity}:

\begin{corollary}\label{cor:TopImage}
Let $f_k \in W^{s,p}(\Omega;\R^n)$ be a sequence of homeomorphisms weakly converging in $W^{s,p}(\Omega;\R^n)$ to $f$. If $sp > n-1$ there exists a set $\Sigma \subset \Omega$ with $\scH^{n-sp}(\Sigma) = 0$ such that
\begin{enumerate}[(i)]
 \item $f^\ast$ is continuous in $\Omega \backslash \Sigma$, and
 \item The set$\{ \fstar(x) \}$ coincides with the topological image $(\fstar)^T(x)$ for every $x \in \Omega \backslash \Sigma$, where $(\fstar)^T(x)$ is defined as
\[
(\fstar)^T(x) := \bigcap_{r \in (0,r_x) \backslash N_x} f^{\ast}(\partial B(x,r)) \cup  \{p \in \bbR^n \setminus \fstar(\p B(x,r)) \, : \, \deg(f^{\ast},B(x,r),p) \neq 0\}\,,
\]
and $r_x$ and $N_x$ have been defined in \Cref{lma:ConvergenceOnSpheres}.
\end{enumerate}
\end{corollary}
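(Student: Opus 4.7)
\emph{Proof plan.} The proof has two parts: continuity of $\fstar$ outside a thin set $\Sigma$, and shrinking of the nested topological image sets
\[
A_r := \fstar(\p B(x,r)) \cup \{p \in \R^n \setminus \fstar(\p B(x,r)) : \deg(\fstar, B(x,r), p) \neq 0\}\,.
\]
For (i), the plan is to verify the oscillation monotonicity hypothesis of \Cref{pr:osciscontinuity}. Fix $x_0 \in \Omega$, let $N_{x_0}$ be from \Cref{lma:ConvergenceOnSpheres}, and for any pair $r < \rho$ in $(0, \dist(x_0, \p\Omega)) \setminus N_{x_0}$ perform a diagonal extraction producing a subsequence along which $\fstar_k \rightrightarrows \fstar$ on both $\p B(x_0, r)$ and $\p B(x_0, \rho)$. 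Since the $f_k$ are homeomorphisms of $\Omega$, \Cref{lma:TopMonotonicity}(ii) delivers $\osc_{\p B(x_0,r)} \fstar \leq 8\, \osc_{\p B(x_0,\rho)} \fstar$. Then \Cref{pr:osciscontinuity} produces $\Sigma$ with $\scH^{(n-sp)_+}(\Sigma)=0$ off which $\fstar$ is continuous (the case $sp \geq n$ is trivial by Morrey embedding), giving (i).

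For (ii) I fix $x \in \Omega \setminus \Sigma$; by \Cref{lma:TopMonotonicity}(i) the family $A_r$ is nondecreasing in $r$, so $(\fstar)^T(x) = \bigcap_{r} A_r$. It suffices to show (a) $\diam A_r \to 0$ as $r \to 0^+$ along good radii, and (b) $\fstar(x) \in A_r$ for every small good $r$. Part (a) follows by revisiting the proof of \Cref{lma:TopMonotonicity}(ii): both $\fstar(\p B(x,r))$ and the nonzero-degree set sit inside a single ball of radius $4\, \osc_{\p B(x,r)} \fstar$, and continuity of $\fstar$ at $x$ forces this oscillation to zero. For (b), assume $\fstar(x) \notin \fstar(\p B(x,r))$ (otherwise trivial); choose $\veps > 0$ with $\overline{B(\fstar(x), 2\veps)}$ disjoint from $\fstar(\p B(x,r))$; by continuity pick a smaller good radius $\rho \in (0,r)\setminus N_x$ with $\fstar(\p B(x,\rho)) \subset B(\fstar(x), \veps/2)$; and extract a subsequence with $\fstar_k \rightrightarrows \fstar$ on $\p B(x,\rho) \cup \p B(x,r)$. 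For large $k$, $f_k(\p B(x,\rho)) \subset B(\fstar(x), \veps)$; since $f_k|_{\overline{B(x,\rho)}}$ is a homeomorphism onto a compact set bounded by the topological sphere $f_k(\p B(x,\rho))$, the image cannot meet the unbounded component of $\R^n \setminus f_k(\p B(x,\rho))$, so $f_k(\overline{B(x,\rho)}) \subset \overline{B(\fstar(x), \veps)}$ and in particular $f_k(x) \in \overline{B(\fstar(x), \veps)}$. Both $f_k(x)$ and $\fstar(x)$ then lie in the same component of $\R^n \setminus f_k(\p B(x,r))$, so homeomorphy of $f_k$ gives $\deg(f_k, \p B(x,r), \fstar(x)) = \deg(f_k, \p B(x,r), f_k(x)) \neq 0$; passing to the limit via uniform convergence on $\p B(x,r)$ yields $\deg(\fstar, \p B(x,r), \fstar(x)) \neq 0$, which is (b).

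The principal obstacle is the absence of any direct pointwise control on $f_k(x)$: a priori $f_k(x)$ need not converge to $\fstar(x)$. The enclosing-ball argument above circumvents this by leveraging the homeomorphism property of $f_k$ to convert uniform convergence on a small nearby sphere into the near-pointwise control $f_k(x) \in \overline{B(\fstar(x),\veps)}$, which is precisely enough to transfer the nontrivial degree at $f_k(x)$ to a nontrivial degree of $\fstar$ at $\fstar(x)$ and thereby pin the shrinking family $A_r$ down to the singleton $\{\fstar(x)\}$.
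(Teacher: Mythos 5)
Your part (i) is the same as the paper's (verify the oscillation-monotonicity hypothesis of \Cref{pr:osciscontinuity} via \Cref{lma:ConvergenceOnSpheres} and \Cref{lma:TopMonotonicity}(ii)), but your part (ii) takes a genuinely different route. The paper establishes $\fstar(x)\in A_r$ by first proving it for $\scL^n$-a.e.\ $x$ in the ball: it applies the Rellich embedding plus Egorov's theorem to get a subsequence converging uniformly on $B(x_0,r)\setminus M_\delta$ with $\scL^n(M_\delta)<\delta$, runs the degree-continuity argument there, and then upgrades ``a.e.'' to ``off $\Sigma$'' by invoking \Cref{lma:WeaktoStrongINV}. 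You instead prove the inclusion directly at the given point $x$: you pick a smaller good radius $\rho$ with $\fstar(\p B(x,\rho))\subset B(\fstar(x),\veps/2)$, and then exploit the Jordan--Brouwer fact that $f_k(B(x,\rho))$ is the bounded component of $\bbR^n\setminus f_k(\p B(x,\rho))$ to trap $f_k(x)$ inside $\overline{B(\fstar(x),\veps)}$; from there $f_k(x)$ and $\fstar(x)$ sit in the same component of $\bbR^n\setminus f_k(\p B(x,r))$, so the degrees agree and one passes to the limit. This is a nice enclosing-ball trick that eliminates Egorov and \Cref{lma:WeaktoStrongINV} entirely at the cost of a short excursion into Jordan--Brouwer separation, and it also cleanly handles the crux you correctly identify: $f_k(x)$ need not converge pointwise to $\fstar(x)$. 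For the diameter estimate you also take a marginally different tack: the paper invokes the Frostman set $X$ from \eqref{eq:FrostLemmaSet} and \eqref{eq:osc:1} directly, while you appeal to continuity of $\fstar$ at $x$ to drive $\osc_{\p B(x,r)}\fstar$ to zero. Both work; one caveat worth flagging in your write-up is that statements like ``$\fstar(\p B(x,\rho))\subset B(\fstar(x),\veps/2)$ for small good $\rho$'' need a brief justification (e.g.\ combine \eqref{eq:osc:1} with the Lebesgue-point property of $x\notin A_{sp}$ and a Fubini slicing argument to produce at least one point on $\p B(x,\rho)$ with value near $\fstar(x)$), since $\fstar$ is asserted continuous only on $\Omega\setminus\Sigma$ and the sphere is not \emph{a priori} known to avoid $\Sigma$; the paper sidesteps exactly this issue by reasoning in terms of the $W^{s,p}$-seminorm rather than pointwise continuity.
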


\begin{proof}
By \Cref{lma:ConvergenceOnSpheres} and Corollary \ref{cor:PrecRepOnSphere}, the assumptions of \Cref{lma:TopMonotonicity} are satisfied for every $x_1$ and $x_2 \in \Omega$ and for almost every $r_1 \in (0,r_{x_1}) \setminus N_{x_1}$ and $r_2 \in (0,r_{x_2}) \setminus N_{x_2}$.
It follows that the assumptions of Proposition \ref{pr:osciscontinuity} are satisfied, and so $\fstar$ is continuous on a $\scH^{n-sp}$-null set $\Sigma$, where $\Sigma = A_{sp} \cup X$; see \eqref{eq:LebPtSet} and \eqref{eq:FrostLemmaSet} for the sets' definitions. Thus (i) is proven.

To prove (ii) it suffices to show that 
\begin{enumerate}[(a)]
\item \label{item:TopImage1}
$\fstar(x) \in (\fstar)^T(x)$ for every $x \in \Omega \setminus \Sigma$, and
\item\label{item:TopImage2}
the diameter of the set $(\fstar)^T(x)$
is zero for every $x \in \Omega \setminus \Sigma$.
\end{enumerate}

To see \ref{item:TopImage1} we start by proving the following stronger statement:
\begin{equation}\label{eq:INV}
\begin{split}
&\text{For every } x_0 \in \Omega \text{ and } r \in (0,r_{x_0}) \setminus N_{x_0}\,, \\
\fstar(x) &\in f^{\ast}(\partial B(x_0,r)) \cup  \{p \in \bbR^n \setminus \fstar(\p B(x_0,r)) \, : \, \deg(f^{\ast},B(x_0,r),p) \neq 0\} \\
&\qquad \text{ for every } x \in B(x_0,r) \setminus \Sigma\,. 
\end{split}\tag{a'}
\end{equation}
Then \ref{item:TopImage1} follows easily from \eqref{eq:INV} by choosing $x_0 \in \Omega \setminus \Sigma$. By definition of $\Sigma$ and by \Cref{lma:WeaktoStrongINV} it in turn suffices to show that
\begin{equation}\label{eq:WeakINV}
\begin{split}
&\text{For every } x_0 \in \Omega \text{ and } r \in (0,r_{x_0}) \setminus N_{x_0}\,, \\
\fstar(x) &\in f^{\ast}(\partial B(x_0,r)) \cup  \{p \in \bbR^n \setminus \fstar(\p B(x_0,r)) \, : \, \deg(f^{\ast},B(x_0,r),p) \neq 0\} \\
&\qquad \text{ for every } x \in B(x_0,r) \setminus M \text{ with } \scL^n(M) = 0\,. 
\end{split}\tag{a''}
\end{equation}
Let $\delta > 0$ be arbitrary. Then by the Sobolev compact embedding theorem and by Egorov's theorem there exists a subsequence (not relabeled) $f_k$ converging uniformly to $\fstar$ on $B(x_0,r) \setminus M_{\delta}$ with $\scL^n(M_{\delta}) < \delta$.
Now let $x \in \Omega \setminus M_{\delta}$. It suffices to show that if $\fstar(x) \notin \fstar(\p B(x_0,r))$ then $\deg(\fstar, B(x_0,r), \fstar(x)) \neq 0$. Since $f_k \rightrightarrows \fstar$ on $\p B(x_0,r)$, $\fstar(x) \notin f_k(\p B(x,r))$ for all $k$ sufficiently large. 
So there exists $\veps > 0$ such that $B(\fstar(x),\veps)$ does not intersect $\fstar(\p B(x_0,r))$ or $f_k(\p B(x_0,r))$ for $k$ sufficiently large. Then since the $f_k$ are homeomorphisms, it must be that $\deg(f_k, \p B(x_0,r), p)$ is a nonzero constant for all $k$ sufficiently large and for all $p \in B(\fstar(x),\veps)$.
In addition, $f_k \rightrightarrows \fstar$ on $B(x_0,r) \setminus M_{\delta}$ so $f_k(x) \in B(\fstar(x),\veps)$ for $k$ sufficiently large, uniformly in $x$. Thus the continuity of the degree yields
\begin{equation*}
\deg(\fstar, B(x_0,r), \fstar(x)) = \lim_{k \to \infty} \deg(f_k, B(x_0,r), f_k(x))\,.
\end{equation*}
Since $\deg(f_k, B(x_0,r), f_k(x))$ is a nonzero constant for all $k$ sufficiently large, we have proved that 
\begin{equation*}
\begin{split}
&\text{For every } x_0 \in \Omega \text{ and } r \in (0,r_{x_0}) \setminus N_{x_0}\,, \\
\fstar(x) &\in f^{\ast}(\partial B(x_0,r)) \cup  \{p \in \bbR^n \setminus \fstar(\p B(x_0,r)) \, : \, \deg(f^{\ast},B(x_0,r),p) \neq 0\} \\
&\qquad \text{ for every } x \in B(x_0,r) \setminus M_{\delta} \text{ with } \scL^n(M_{\delta}) < \delta\,. 
\end{split}
\end{equation*}
Since $\delta > 0$ is arbitrary \eqref{eq:WeakINV} is proved.

To see \ref{item:TopImage2}, let $x_0 \in \Omega \setminus \Sigma$, and let $\veps > 0$. Then by definition of the set $X$ there exists $R = R(x_0,\veps) \in (0,r_{x_0})$ such that
\begin{equation*}
R^{sp-n} [\fstar]_{W^{s,p}(B(x_0,R))} < \veps\,.
\end{equation*}
So by  \Cref{lma:TopMonotonicity} \ref{item:DegPf:2}  and \eqref{eq:osc:1}
\begin{equation*}
\diam (\fstar)^T(x_0) \leq \diam \Big( \fstar(\p B(x_0,r)) \cup \{ p \, : \, \deg(\fstar, B(x_0,r), p) \neq 0 \} \Big) < C \veps
\end{equation*}
for every $r \in (0,R/4) \setminus N_{x_0}$. Therefore by definition 
$\diam (\fstar)^T(x_0) < \veps$. The proof is complete.

\begin{remark}
We can define a representative $\wh{f}$ of $f$ as
\begin{equation}
\wh{f}(x) :=
	\begin{cases}
		\fstar(x)\,, \qquad& x \in \Omega \setminus \Sigma\,, \\
		\text{any element of } f^T(x)\,, \qquad& \text{ otherwise, }
	\end{cases}
\end{equation}
Then $\wh{f}$ agrees with $\fstar$ everywhere outside $\Sigma$, and $\wh{f}$ has the added property that $\wh{f}(x) \in (\wh{f})^T(x)$ \textit{for every} $x \in \Omega$. 
\end{remark}

\end{proof}

\section{Proof of Theorem~\ref{thm:MainThm}}\label{s:thmainproof}

\begin{proof}[Proof of Theorem \ref{thm:MainThm}]
We proceed identically to \cite{BHM19}. Assume that $f = \wh{f}$. We argue by contradiction; suppose that there is a $\delta > 0$ such that the set
\begin{equation}
\Gamma := \Vbrack{y \in \bbR^n \, : \, \diam(f^{-1}(\vbrack{y})) > 0} 
\end{equation}
satisfies $\scH^{\frac{n-1}{s}+\delta}(\Gamma) > 0$. Then there exists $K \in \bbN$ such that the set
\begin{equation}
\Gamma_K := \Vbrack{y \in \bbR^n \, : \, \diam(f^{-1}(\vbrack{y})) > \frac{1}{K}}
\end{equation}
satisfies $\scH^{\frac{n-1}{s}+\delta}(\Gamma_K) > 0$, since $F = \bigcup_{k \in \bbN} \Gamma_k$.
For each $x$ there exists $r < \frac{1}{2K}$ such that $f|_{\p B(x,r)} \in W^{s,p}(\p B(x,r);\bbR^n) \cap C^0(\p B(x,r);\bbR^n)$ by Corollary \ref{lma:ConvergenceOnSpheres}. Then choosing a covering of $\Omega$ with such a collection  $\cB := \vparen{B(x_i,r_i)}_{i=1}^{\infty}$, by Theorem \ref{thm:DimOfImage} we have $\dim_{\scH}(f(\p B(x_i,r_i))) < \frac{n-1}{s}$, so $\scH^{\frac{n-1}{s}+\delta}(f(\p B(x_i,r_i))) = 0$. Therefore, the set
\begin{equation}
E := \bigcup_{i=1}^{\infty} f(\p B(x_i,r_i))
\end{equation}
satisfies $\scH^{\frac{n-1}{s}+\delta}(E) = 0$. We will show that $\Gamma_K \subset E$, which contradicts the statement $\scH^{\frac{n-1}{s}+\delta}(\Gamma_K) > 0$.

Assume $y \in \Gamma_K \setminus E$. Then there must exist $z_1$ and $z_2$ in $\Omega$ with $f(z_1) = f(z_2) = y$, with $\dist(z_1, z_2) > \frac{1}{K}$. Fix an element $B(x_i, r_i)$ from the collection $\cB$ with $z_1 \in B(x_i, r_i)$ and $z_2 \notin B(x_i,r_i)$.
Combining \Cref{lma:TopMonotonicity}\ref{item:DegPf:1} with the fact that 
\begin{equation*}
f(x) \in f^T(x) \subset f(\p B(x,r)) \cup \{ p \in \bbR^n \setminus f(\p B(x,r)) \, : \, \deg(f,\p B(x,r),p) \neq 0 \}
\end{equation*}
for all $x \in \Omega$ and for $r \in (0,\dist(x,\p \Omega)) \setminus N_x$, we get
\begin{equation*}
y = f(z_1) \in f(\p B(x_i,r_i)) \cup \{ p \in \bbR^n \setminus f(\p B(x_i,r_{x_i})) \, : \, \deg(f, B(x_i,r_{x_i}),p) \neq 0 \}\,.
\end{equation*}
However $y \notin E$ so $y \notin f(\p B(x_i,r_i))$, and thus
\begin{equation*}
y = f(z_1) \in \{ p \in \bbR^n \setminus f(\p B(x_i,r_{x_i})) \, : \, \deg(f, B(x_i,r_{x_i}),p) \neq 0 \}\,.
\end{equation*}
At the same time, a similar argument using \Cref{lma:TopMonotonicity}\ref{item:DegPf:3} gives
\begin{equation*}
y = f(z_2) \in f^T(z_2) \subset \bbR^n \setminus \{ p \in \bbR^n \setminus f(\p B(x_i,r_{x_i})) \, : \, \deg(f, B(x_i,r_{x_i}),p) \neq 0 \}\,,
\end{equation*}
which is a contradiction.
\end{proof}

\appendix

\section{Proof of Lemma~\ref{lma:TangentDerivEstimate}}\label{s:lemma2}

\begin{proof}
It suffices to prove \eqref{eq:TangentDerivEstimate} for $a = 0$ and $r = 1$. In the case of general $a$ and $r$ we can apply \eqref{eq:TangentDerivEstimate} for $a=0$, $r=1$ to the function 
\begin{equation*}
g(x) := f(a+rx) \in W^{s,p}(B(0,1))
\end{equation*}
and obtain \eqref{eq:TangentDerivEstimate} for general $a$ and $r$ by change of variables. 

Since the function $f - (f)_B$ also belongs to $W^{s,p}(B(0,1))$ we can assume without loss of generality that
\begin{equation*}
\fint_{B(0,1)} f \, \rmd x = 0\,.
\end{equation*}
Thus by the Poincar\'e inequality it suffices to show that there exists a constant $C = C(n,s,p) > 0$ such that
\begin{equation}\label{eq:TangentDerivEstimate:Reduced}
\begin{split}
\int_{1/2}^1 \iintdm{\bbS^{n-1}}{\bbS^{n-1}}{\rho^{n-1-sp}\frac{|f(\rho x)-f(\rho y)|^p}{|x-y|^{n-1+sp}}}{\scH^{n-1}(y)}{\scH^{n-1}(x)} \, \rmd \rho \leq C \Vnorm{f}_{W^{s,p}(B(0,1))}^p\,;
\end{split}
\end{equation}
note that we used polar coordinates to rewrite the integral.

We prove \eqref{eq:TangentDerivEstimate:Reduced} by splitting the domain of the left-hand side integral and estimating each piece. Each domain of integration is locally homeomorphic to a Euclidean ball in $\bbR^{n-1}$, which allows us to apply translation arguments in the spirit of \cite[Lemma 7.44]{A75}. Any local diffeomorphism between $\bbS^{n-1}$ and $\bbR^{n-1}$ will do, but we make this argument explicit by using stereographic projection.

\underline{Step 1:} To this end, define for each $\mu \in [0,1)$ the spherical cap $H_{\mu} := \{ x \in \bbS^{n-1} \, : \, x_n < \mu \}$.
We will show that for every $\mu \in [0,1)$ there exists a constant $C = C(n,s,p)$ such that
\begin{equation}\label{eq:SphereEstimate:Step1}
\begin{split}
\int_{1/2}^1 &\iintdm{H_{\mu}}{H_{\mu}}{\rho^{n-1-sp} \frac{|f(\rho x)-f(\rho y)|^p}{|x-y|^{n-1+sp}}}{\scH^{n-1}(y)}{\scH^{n-1}(x)} \, \rmd \rho \leq C \Vparen{\frac{1+\mu}{1-\mu}}^{1+sp} \Vnorm{f}_{W^{s,p}(B(0,1))}^p\,.
\end{split}
\end{equation}

Throughout the proof we write $B_{n-1}(0,\lambda)$ for any $\lambda > 0$ as the ball in $\bbR^{n-1}$ centered at $0$ of radius $\lambda$. We next establish notation for the stereographic projection $\psi : \bbR^{n-1} \to \bbS^{n-1} \setminus \{(0,\ldots, 0, 1)\}$ to prove \eqref{eq:SphereEstimate:Step1}. Details on the stereographic projection can be found in several places, for instance \cite[Appendix D.6]{G08}. We use the definition 
\begin{equation*}
\psi(x_1,\ldots,x_{n-1}) := \Vparen{\frac{2x_1}{1+|x|^2}, \ldots, \frac{2x_{n-1}}{1+|x|^2}, \frac{|x|^2-1}{1+|x|^2} }
\end{equation*}
so that we have the correspondence of domains
\begin{equation*}
\psi(B_{n-1}(0,\lambda)) = H_{\mu}\,,
\quad \text{ where } \quad
\lambda = \sqrt{\frac{1+\mu}{1-\mu}} \,.
\end{equation*}
The formula for the Jacobian $J_{\psi}(x) := \Vparen{ \frac{2}{1+|x|^2} }^{n-1}$ will be used throughout in order to ensure that quantities such as $J_{\psi}(x-y)$ and $|J_{\psi}(x)-J_{\psi}(y)|$ remain bounded above and below uniformly for $x$ and $y$ in $B_{n-1}(0,\lambda)$, with bounds depending only on $n$ and $\lambda$.

To prove \eqref{eq:SphereEstimate:Step1} we need to show that for every $\lambda \in [1,\infty)$ and for every ball $B_{n-1}(0,\lambda) \subset \bbR^{n-1}$
\begin{equation}\label{eq:SphereEstimate:Step1:SterProj}
\begin{split}
\int_{1/2}^1 &\iintdm{B_{n-1}(0,\lambda)}{B_{n-1}(0,\lambda)}{\rho^{n-1-sp} \frac{|f(\rho \psi(x))-f(\rho \psi(y))|^p}{|\psi(x)-\psi(y)|^{n-1+sp}} \\
	&\qquad \times J_{\psi}(x) J_{\psi}(y)}{y}{x} \, \rmd \rho \precsim_{n,s,p} \lambda^{2+2sp} \Vnorm{f}_{W^{s,p}(B(0,1))}^p\,.
\end{split}
\end{equation}
We proceed using a technique found in \cite[Lemma 7.44]{A75}. Let $\sigma \in [1/2,1]$, and integrate the inequality 
\begin{equation*}
\begin{split}
|f(\rho \psi(x))-f(\rho \psi(y))|^p &\precsim_p |f(\rho \psi(x)) - f(\sigma\psi(\textstyle{\frac{x+y}{2}}))|^p \\
	&\qquad + |f(\sigma\psi(\textstyle{\frac{x+y}{2}}))-f(\rho \psi(y))|^p
\end{split}
\end{equation*}
with respect to $\sigma$ over the ball $B(r,\textstyle{\frac{|\psi(x)-\psi(y)|}{2}}) \cap [\textstyle{\frac{1}{2}},1] \subset \bbR$ to get
\begin{equation*}
\begin{split}
& |f(\rho \psi(x))-f(\rho \psi(y))|^p  \\
	&\quad \precsim_p \frac{1}{|\psi(x)-\psi(y)|} \int_{ \{ |\sigma-\rho|\leq \frac{|\psi(x)-\psi(y)|}{2} \} \cap [1/2,1] } |f(\rho\psi(x)) - f(\sigma\psi(\textstyle{\frac{x+y}{2}}))|^p \, \rmd \sigma \\
	&\qquad + \frac{1}{|\psi(x)-\psi(y)|} \int_{ \{ |\sigma-\rho|\leq \frac{|\psi(x)-\psi(y)|}{2} \} \cap [1/2,1] } |f(\sigma\psi(\textstyle{\frac{x+y}{2}}))-f(\rho \psi(y))|^p \, \rmd \sigma\,.
\end{split}
\end{equation*}
Set for $\eta \in \bbR^{n-1}$
\begin{equation*}
\Upsilon(\eta) := \int_{ \{ |\sigma-\rho|\leq \frac{|\psi(x)-\psi(y)|}{2} \} \cap [1/2,1] } |f(\rho \psi(\eta)) - f(\sigma\psi(\textstyle{\frac{x+y}{2}}))|^p \, \rmd \sigma\,;
\end{equation*}
therefore
\begin{equation}\label{eq:SphereEstimateProof:0.5}
\begin{split}
&\int_{1/2}^1 \rho^{n-1-sp} \iintdm{B_{n-1}(0,\lambda)}{B_{n-1}(0,\lambda)}{\frac{|f(\rho\psi(x))-f(\rho\psi(y))|^p}{|\psi(x)-\psi(y)|^{n-1+sp}} J_{\psi}(x) J_{\psi}(y)}{y}{x} \, \rmd \rho \\
	&\quad \precsim_p \int_{1/2}^1 \rho^{n-1-sp} \iintdm{B_{n-1}(0,\lambda)}{B_{n-1}(0,\lambda)}{\frac{\Upsilon(x)+\Upsilon(y)}{|\psi(x)-\psi(y)|^{n+sp}} J_{\psi}(x) J_{\psi}(y)}{y}{x} \, \rmd \rho := \rmI +\mathrm{II}\,.
\end{split}
\end{equation}
Now by change of variables and using the formula for $J_{\psi}$ as well as the formula
\begin{equation*}
|\psi(x) -\psi(y)| = \frac{2|x-y|}{(1+|x|^2)^{1/2} (1+|y|^2)^{1/2}}
\end{equation*}
which is valid for all $x$, $y \in \bbR^{n-1}$, we have
\begin{equation*}
\begin{split}
\rmI &= \int_{1/2}^1 \int_{B_{n-1}(0,\lambda)} \int_{B_{n-1}(0,\lambda)} \int_{ \{ |\sigma-\rho|\leq \frac{|\psi(x)-\psi(y)|}{2} \} \cap [1/2,1] } \rho^{n-1-sp} \frac{|f(\rho\psi(x))-f(\sigma\psi({\textstyle \frac{x+y}{2}}))|^p}{|\psi(x)-\psi(y)|^{n+sp}} \\
	&\qquad \hspace{0.3\textwidth} \times J_{\psi}(x) J_{\psi}(y) \, \rmd \sigma \, \rmd y \, \rmd x \, \rmd \rho \\
	& = \int_{1/2}^1 \int_{1/2}^1 \int_{B_{n-1}(0,\lambda)} \int_{\Vbrack{|2z-x| \leq \lambda} \cap \Vbrack{|\psi(x)-\psi(2z-x)| \geq 2|\sigma-\rho|}} \rho^{n-1-sp} \frac{|f(\rho \psi(x))-f(\sigma\psi(z))|^p}{|\psi(x)-\psi(2z-x)|^{n+sp}} \\
	&\qquad \hspace{0.3\textwidth} \times J_{\psi}(x) J_{\psi}(2z-x) \, \rmd z \, \rmd x \, \rmd \sigma  \, \rmd \rho \\
	& = \frac{1}{2^{(n+sp)/2}} \int_{1/2}^1 \int_{1/2}^1 \int_{B_{n-1}(0,\lambda)} \int_{\Vbrack{|2z-x| \leq \lambda} \cap \Vbrack{|\psi(x)-\psi(z)| \geq G(x,z) |\sigma-\rho|}} \rho^{n-1-sp} \frac{|f(\rho \psi(x))-f(\sigma\psi(z))|^p}{|\psi(x)-\psi(z)|^{n+sp}} \\
	&\qquad \hspace{0.3\textwidth} \times J_{\psi}(x) J_{\psi}(z) G(x,z)^{2+sp-n} \, \rmd z \, \rmd x \, \rmd \sigma  \, \rmd \rho\,,
\end{split}
\end{equation*}
where
\begin{equation*}
G(x,z) :=  \Vparen{\frac{1+|2z-x|^2}{1+|x|^2}}^{1/2}\,.
\end{equation*}
Now, since $|x| \leq \lambda$ and $|2z-x| \leq \lambda$
the uniform bound 
$
\frac{1}{\sqrt{1+\lambda^2}} \leq G(x,z) \leq \sqrt{1+ \lambda^2}
$
holds, 
and so $\rmI$ can be majorized by
\begin{equation}\label{eq:SphereEstimateProof:3}
\begin{split}
	& C(n,s,p) \lambda^{2+sp-n} \int_{1/2}^1 \int_{1/2}^1 \int_{B_{n-1}(0,\lambda)} \int_{\Vbrack{|2z-x| \leq \lambda} \cap \Vbrack{|\psi(x)-\psi(z)| \geq \frac{|\sigma-\rho|}{\sqrt{1+\lambda^2}}} } \rho^{n-1-sp} \\
		&\qquad \hspace{0.2\textwidth} \times\frac{|f(\rho \psi(x))-f(\sigma\psi(z))|^p}{|\psi(x)-\psi(z)|^{n+sp}}  J_{\psi}(x) J_{\psi}(z)  \, \rmd z \, \rmd x \, \rmd \sigma \, \rmd \rho\,.
\end{split}
\end{equation}
Finally, on the domain of integration in \eqref{eq:SphereEstimateProof:3} the estimate
\begin{equation*}
\begin{split}
|\rho \psi(x)-\sigma \psi(z)| &\leq \rho|\psi(x)-\psi(z)| + |\psi(z)| |\rho-\sigma| \\
	&\leq |\psi(x)-\psi(z)| + (1+\lambda^2)^{1/2} |\psi(x)-\psi(z)| \leq 2 \lambda |\psi(x)-\psi(z)|
\end{split}
\end{equation*}
holds, and since $\rho$ and $\sigma$ are bounded away from zero
\begin{equation*}
\begin{split}
\rmI & \leq C \lambda^{2+2sp} \int_{1/2}^1 \int_{1/2}^1 \int_{B_{n-1}(0,\lambda)} \int_{\Vbrack{|2z-x| \leq \lambda} \cap \Vbrack{|\psi(x)-\psi(z)| \geq \frac{|\sigma-\rho|}{\sqrt{1+\lambda^2}}} } \rho^{n-1} \sigma^{n-1}  \\
		&\qquad \hspace{0.2\textwidth} \times  \frac{|f(\rho \psi(x))-f(\sigma\psi(z))|^p}{|\rho \psi(x)-\sigma\psi(z)|^{n+sp}} J_{\psi}(x) J_{\psi}(z)  \, \rmd z \, \rmd x \, \rmd \sigma \, \rmd \rho \\
	& \leq C \lambda^{2+2sp} \int_0^1 \int_0^1 \int_{\bbR^{n-1}} \int_{\bbR^{n-1}} \rho^{n-1} \sigma^{n-1}  \frac{|f(\rho \psi(x))-f(\sigma\psi(z))|^p}{|\rho \psi(x)-\sigma\psi(z)|^{n+sp}} \\
		&\qquad \hspace{0.2\textwidth} \times J_{\psi}(x) J_{\psi}(z)  \, \rmd z \, \rmd x \, \rmd \sigma \, \rmd \rho \\
	& = C(n,s,p) \lambda^{2+2sp} \iintdm{B(0,1)}{B(0,1)}{\frac{|f(x)-f(y)|^p}{|x-y|^{n+sp}}}{y}{x}\,.
\end{split}
\end{equation*}
A similar estimate holds for the quantity $\mathrm{II}$ in \eqref{eq:SphereEstimateProof:0.5}.
Therefore \eqref{eq:SphereEstimate:Step1:SterProj}, and thus \eqref{eq:SphereEstimate:Step1}, is proved.

\underline{Step 2:} 
We conclude the proof. Split the integral on the left-hand side of \eqref{eq:TangentDerivEstimate:Reduced} via change of variables and symmetry as
\begin{equation}\label{eq:SphereEstimate:Pieces}
\begin{split}
\int_{1/2}^1 & \iintdm{\bbS^{n-1}}{\bbS^{n-1}}{\rho^{n-1-sp} \frac{|f(\rho x)-f(\rho y)|^p}{|x-y|^{n-1+sp}}}{\scH^{n-1}(y)}{\scH^{n-1}(x)} \, \rmd \rho \\
	&= \int_{1/2}^1 \int_{H_0} \int_{H_0} \cdots +  \int_{1/2}^1 \int_{\bbS^{n-1} \setminus H_0} \int_{\bbS^{n-1} \setminus H_0} \cdots + 2 \int_{1/2}^1 \int_{\bbS^{n-1} \setminus H_0} \int_{H_0} \cdots \\
	&:= \rmI + \mathrm{II} + \mathrm{III}\,.
\end{split}
\end{equation}
Clearly by \eqref{eq:SphereEstimate:Step1} with $\mu = 0$
\begin{equation}\label{eq:SphereEstimate:Proof:EstimateOfI}
\rmI \precsim_{n,s,p} [f]_{W^{s,p}(B(0,1))}^p\,.
\end{equation}
Now, let $Q : \bbR^{n-1} \to \bbR^{n-1}$ be the matrix $\diag(1,1,\ldots,1,-1)$.
Setting $h(x) = f(Qx)$ for any $x \in B_1(0)$, a change of variables gives
\begin{equation*}
\mathrm{II} = \int_{1/2}^1 \iintdm{H_0}{H_0}{\rho^{n-1-sp} \frac{|h(\rho x)-h(\rho y)|^p}{|x-y|^{n-1+sp}}}{\scH^{n-1}(y)}{\scH^{n-1}(x)} \, \rmd \rho\,.
\end{equation*}
Thus by \eqref{eq:SphereEstimate:Step1} with $\mu = 0$ and by another change of variables
\begin{equation}\label{eq:SphereEstimate:Proof:EstimateOfII}
\mathrm{II} \leq C \iintdm{B(0,1)}{B(0,1)}{\frac{|h(x)-h(y)|^p}{|x-y|^{n+sp}}}{y}{x} = C \iintdm{B(0,1)}{B(0,1)}{\frac{|f(x)-f(y)|^p}{|x-y|^{n+sp}}}{y}{x}\,.
\end{equation}
For the last integral, we have
\begin{equation*}
\begin{split}
\mathrm{III} &= 2 \int_{1/2}^1 \int_{H_{1/2} \setminus H_0 } \int_{H_0} \cdots + 2 \int_{1/2}^1 \int_{\bbS^{n-1} \setminus H_{1/2} } \int_{H_0} \cdots  \\
	&:= \mathrm{III}_1 + \mathrm{III}_2\,.
\end{split}
\end{equation*}
Using that $H_0 \subset H_{1/2}$ along with  \eqref{eq:SphereEstimate:Step1} for $\mu = 1/2$,
\begin{equation}\label{eq:SphereEstimate:Proof:EstimateOfIII1}
\begin{split}
\mathrm{III}_1 &\leq \int_{1/2}^1 \iintdm{H_{1/2}}{H_{1/2}}{\rho^{n-1-sp} \frac{|f(\rho x)-f(\rho y)|^p}{|x-y|^{n-1+sp}} }{\scH^{n-1}(y)}{\scH^{n-1}(x)} \, \rmd \rho \leq [f]_{W^{s,p}(B(0,1))}^p\,.
\end{split}
\end{equation}
Since $\dist(\overline{\bbS^{n-1} \setminus H_{1/2}},\overline{H_0}) = C(n) > 0$, we have that $|x-y| \geq C(n) > 0$ for all $x \in \bbS^{n-1} \setminus H_{1/2}$ and for all $y \in H_0$, and so the integral $\mathrm{III}_2$ can be estimated by
\begin{equation}\label{eq:SphereEstimate:Proof:EstimateOfIII2}
\mathrm{III}_2 \precsim_{n,s,p} \int_{1/2}^1 \int_{\bbS^{n-1}} |f(\rho x)|^p \, \rmd \scH^{n-1}(x) \, \rmd \rho \precsim_{n,s,p} \Vnorm{f}_{L^p(B(0,1))}\,.
\end{equation}
Combining \eqref{eq:SphereEstimate:Pieces} with estimates \eqref{eq:SphereEstimate:Proof:EstimateOfI}, \eqref{eq:SphereEstimate:Proof:EstimateOfII}, \eqref{eq:SphereEstimate:Proof:EstimateOfIII1} and \eqref{eq:SphereEstimate:Proof:EstimateOfIII2} gives \eqref{eq:TangentDerivEstimate:Reduced}.
\end{proof}

\bibliographystyle{abbrv}
\bibliography{bib}

\end{document}